\DeclareMathAlphabet{\mathcal}{OMS}{cmsy}{m}{n}
\newcommand{\avg}[1]{\left\{\hspace*{-3pt}\left\{#1\right\}\hspace*{-3pt}\right\}}
\newcommand{\jump}[1]{\ensuremath{\left\llbracket #1 \right\rrbracket}}
\newcommand\iprod[1]{\left\langle #1\right\rangle} 			% inner product
\newcommand\iprodN[1]{\left\langle #1\right\rangle_{\!N}}		% discrete inner product subscripted with N
\renewcommand\vec[1]{\accentset{\,\rightarrow}{#1}}
\newcommand\spacevec[1]{\accentset{\,\rightarrow}{#1}}		% spatial vector, i.e. x \hat x + y \hat y + z \hat z
\newcommand\statevec[1]{\mathbf #1}					% state vector, e.g. [rho, rhov, E, B]^T
\newcommand\statevecGreek[1]{\boldsymbol #1}			% state vector for Greek symbols
\newcommand\acclrvec[1]{\accentset{\,\leftrightarrow}{#1}}	% define leftrightarrow as an accent
\newcommand\bigstatevec[1]{\acclrvec{{\mathbf #1}}}	%	 block vector
\newcommand\bigcontravec[1]{\acclrvec{\tilde{\mathbf #1}}} 	% contravariant block vector
\newcommand\normalmatrix[1]{\underline{ #1}}		
\newcommand\dS{\,\operatorname{dS} }
\title{An entropy stable discontinuous Galerkin method for the two-layer shallow water equations on curvilinear meshes}
\author[1]{Patrick Ersing}%\thanks{patrick.ersing@liu.se}}
\author[1]{Andrew R. Winters}
\affil[1]{Department of Mathematics; Division of Applied Mathematics, Linköping University, Sweden}
\newtheorem{theorem}{Theorem}[section]
\newtheorem{lemma}[theorem]{Lemma}
\newtheorem{corollary}[theorem]{Corollary}
\theoremstyle{remark}
\newtheorem{remark}[theorem]{Remark}
\begin{document}

\maketitle

\begin{abstract}
We present an entropy stable nodal discontinuous Galerkin spectral element method (DGSEM) for the two-layer shallow water equations on two dimensional curvilinear meshes. We mimic the continuous entropy analysis on the semi-discrete level with the DGSEM constructed on Legendre-Gauss-Lobatto (LGL) nodes. The use of LGL nodes endows the collocated nodal DGSEM with the summation-by-parts property that is key in the discrete analysis. The approximation exploits an equivalent flux differencing formulation for the volume contributions, which generate an entropy conservative split-form of the governing equations. A specific combination of an entropy conservative numerical surface flux and discretization of the nonconservative terms is then applied to obtain a high-order path-conservative scheme that is entropy conservative and has the well-balanced property for discontinuous bathymetry. Dissipation is added at the interfaces to create an entropy stable approximation that satisfies the second law of thermodynamics in the discrete case. We conclude with verification of the theoretical findings through numerical tests and demonstrate results about convergence, entropy stability and well-balancedness of the scheme.

\vspace{0.1cm}

\keywords{ Two-layer shallow water system, 
                  Well-balanced method, 
                  Discontinuous Galerkin spectral element method, 
                  Summation-by-parts, 
                  Entropy stability}

\vspace{0.1cm}

\AMS{ 65M70,
           65M12,
           65M20,
           76M22,
           35L50}
% Numerical Analysis 65
% M70 - Spectral, collocation and related methods for initial value and initial-boundary value problems involving PDEs
% M12 - Stability and convergence of numerical methods for initial value and initial-boundary value problems involving PDEs
% M20 - Method of lines for initial value and initial-boundary value problems involving PDEs
% Basic methods in fluid mechanics 76
% M22 - Spectral methods applied to problems in fluid mechanics
% 35 Hyperbolic equations and hyperbolic systems 
% L50 - Initial-boundary value problems for first-order hyperbolic systems
\end{abstract}

\section{Introduction}\label{Introduction}
% Two-Layer Model Motivation
The two-layer shallow water equations are a widely used model to describe the behavior of two immiscible fluid layers of different density, where the horizontal scales are large relative to the depth scale. Typical applications range from water bodies of different salinity and temperature \cite{castro2007improved} to mixtures of oil and water \cite{fyhn2019consistent}, as well as underwater landslides \cite{ostapenko1999numerical,kim2008two}.

% Mathematical Description
The system is derived from depth-averaging of the inviscid Navier-Stokes equations and can be considered as a combination of the standard shallow water equation (SWE) for each fluid layer with an additional nonconservative term that accounts for the coupling between fluid layers and interaction with the bottom topography.

% Difficulties with the Two-Layer Model
The design of numerical schemes for this system encounters several difficulties, many of them connected to the inclusion of the nonconservative products. These additional coupling terms affect the wave propagation and can even change the nature of system. In contrast to standard shallow water flows, the system is only conditionally hyperbolic as the baroclinic mode allows for complex eigenvalues. This loss of hyperbolicity occurs for large velocity differences between the layers and has been linked to the onset of Kelvin-Helmholtz instability \cite{abgrall2009two}. The computation of the eigenvalues themselves is also not straightforward as even the one-dimensional system yields a $4 \times 4$ flux Jacobian and a direct calculation of eigenvalues is not possible.

% Well-Balancedness
Another important aspect that arises in the numerical treatment of balance laws is the preservation of steady-state solutions. A varying or even discontinuous bottom topography generates additional source contributions that necessitate a careful discretization of source terms and fluxes to ensure a discrete balance. Failure to preserve these steady states can lead to accumulation of balancing errors over time and the formation of unphysical waves \cite{chertock2018well}, on the order of the grid spacing. Since most shallow water applications revolve aorund the computation of small perturbations around a steady-state solution, such as gravity waves, tidal flow or tsunami formation, preserving this balance is essential. A numerical scheme that preserves these steady states is denoted as well-balanced.

% Introduction DG / Numerical works
In recent developments regarding numerical schemes for the two-layer SWE most activities focused on the development of finite volume methods \cite{castro2007improved,kurganov2009central,swartenbroekx2010hyperbolicity,kim2008two,chiapolino2018models,abgrall2009two} with only few advances in the application of high-order discontinuous Galerkin methods \cite{izem2016discontinuous,fernandez2022arbitrary}.
% Motivation for DG
Due its ability to create high-order approximations and its optimal dissipation and dispersion properties \cite{ainsworth2004dispersive} discontinuous Galerkin methods can provide excellent approximations for this conditionally hyperbolic system of balance laws. In order to utilize these compelling properties, we select DGSEM as our numerical method in this paper. 
% Transition to Stability
While the high-order nature of these methods allows for accurate approximations, the inherent low amount of numerical dissipation also makes them prone to robustness issues \cite{gassner2021novel}.
% Entropy Stability 

A contemporary approach to create robust DG approximations in the nonlinear case is the use of entropy stability \cite{tadmor1987numerical}, where the solution is bounded by a strongly convex mathematical entropy function. For the two-layer SWE such an entropy function naturally occurs as the total energy within the layers.
% SBP-Flux-Differencing Approach
In \cite{gassner2021novel,winters2021construction} a methodology to construct entropy stable DG methods based on the summation-by-parts (SBP) property and a flux-differencing formulation was proposed. Following this approach we construct an entropy stable nodal DGSEM using the SBP properties to mimic a continuous entropy analysis in the semi-discrete case. Furthemore, we show that for a special discretization of fluxes and nonconservative term the resulting scheme is well-balanced.

% Structure of the paper
For the discussion in this paper we first describe the theory and challenges of the two-layer SWE in Section \ref{sec:theory_twolayerSWE}, followed by a continuous entropy analysis of the system in Section \ref{sec:continuous_entropy_analysis}, which we will mimic in the semi-discrete setting. We then derive a split-form DGSEM on curvilinear elements in Section \ref{sec:split_form_dgsem} that will provide a baseline scheme for the further derivation of entropy conservative and entropy stable approximations. In Section \ref{sec:entropy_conservation}, we first derive entropy conservative numerical fluxes that are then used in the split-form DGSEM to create an entropy conservative approximation. We then contract the DGSEM with the entropy variables and rely on the SBP property and metric identities to demonstrate entropy conservation and well-balancedness. From this entropy conservative formulation we then extend to one that is entropy stable by adding appropriate dissipation in Section \ref{sec:entropy_stable_dgsem}. The theoretical findings are then demonstrated and verified in Section \ref{sec:results} with a number of test cases that provide numerical results for convergence, well-balancedness and entropy conservation and stability.

\section{Two-layer equations}\label{sec:theory_twolayerSWE}
We consider the two-layer SWE, which are a nonlinear system of balance laws derived from depth-averaging the inviscid Navier-Stokes-equations. The two-dim\-en\-sion\-al system is given by
\begin{equation}
	\begin{bmatrix}
		h_1 \\ h_1u_1 \\ h_1v_1 \\ h_2 \\ h_2u_2 \\ h_2v_2
	\end{bmatrix}_t
	+
	\begin{bmatrix}
		h_1u_1 \\
		h_1u_1^2 + \frac{gh_1^2}{2}\\
		h_1u_1v_1\\
		h_2u_2 \\
		h_2u_2^2 + \frac{gh_2^2}{2}\\
		h_2u_2v_2
	\end{bmatrix}_x
	+
	\begin{bmatrix}
		h_1v_1 \\
		h_1u_1v_1 \\
		h_1v_1^2 + \frac{gh_1^2}{2}\\
		h_2v_2 \\
		h_2u_2v_2\\
		h_2v_2^2 + \frac{gh_2^2}{2}\\
	\end{bmatrix}_y
	=
	-\begin{bmatrix}
		0 \\ 
		gh_1(b+h_2)_x\\
		gh_1(b+h_2)_y\\
		0 \\
		gh_2(b+\frac{\rho_1}{\rho_2}h_1)_x\\
		gh_2(b+\frac{\rho_1}{\rho_2}h_1)_y\\
	\end{bmatrix},
	\label{eq:two_layer_system_expanded}
\end{equation}
where the subscripts denote the upper $``1"$ and lower ``$2$" fluid layer. For each layer $h_i$ denotes the respective layer height, $u_i$ and $v_i$ the averaged layer velocity in $x$- and $y$-direction and $\rho_i$ the different constant fluid densities, where we assume that $\rho_1<\rho_2$. Furthermore, $b\equiv b(x,y)$ denotes the variable bottom topography and $g$ the gravitational acceleration. The conservative part of the system corresponds to the standard SWE for each layer and only differs by additional nonconservative coupling terms on the right hand side that are given by $gh_1\left(h_2\right)_x$ in the upper layer and $gh_2(\frac{\rho_1}{\rho_2}h_1)_x$ in the lower layer.

To simplify the analysis we introduce a general compact notation for this system of balance laws
\begin{equation}
	\statevec{u}_t + \vec{\nabla}_x \cdot \bigstatevec{f}(\statevec{u}) + \statevecGreek{\phi}(\statevec{u})\circ\left(\vec{\nabla}_x\cdot\bigstatevec{r}(\statevec{u})\right) = 0,
	\label{eq:balance_law_continuous}
\end{equation}
where $\statevec{u} = (h_1, h_1u_1, h_1v_1, h_2, h_2u_2, h_2v_2)^T$ denotes the state vector of conserved quantities and the nonconservative term composed of a nonconservative product and a source term is reformulated in terms of the Hadamard product with the state vector $\statevecGreek{\phi}(\statevec{u}) = (0, gh_1, gh_1, 0, gh_2, gh_2)^T$. To obtain a compact notation for the fluxes and nonconservative term we introduce the block vectors, see \cite{winters2021construction,bohm2020entropy}
\begin{equation}
	\bigstatevec{f} = \begin{bmatrix} \statevec{f}_1 \\ \statevec{f}_2\end{bmatrix}, \quad
	\bigstatevec{r} = \begin{bmatrix} \statevec{r}_1 \\ \statevec{r}_2\end{bmatrix},
\end{equation}
that contain the respective components in $x$- and $y$-directions
\begin{equation}
	\statevec{f}_1 = 
	\begin{bmatrix}
		h_1u_1 \\
		h_1u_1^2 + \frac{gh_1^2}{2}\\
		h_1u_1v_1 \\
		h_2u_2 \\
		h_2u_2^2 + \frac{gh_2^2}{2}\\
		h_2u_2v_2
	\end{bmatrix}\hspace{-0.1cm},\,\,
	\statevec{f}_2 = 
		\begin{bmatrix}
		h_1v_1 \\
		h_1u_1v_1 \\
		h_1v_1^2 + \frac{gh_1^2}{2}\\
		h_2v_2 \\
		h_2u_2v_2\\
		h_2v_2^2 + \frac{gh_2^2}{2}\\
	\end{bmatrix}\hspace{-0.1cm},\,\,
	\statevec{r}_1 =
	\begin{bmatrix}
		0 \\ 
		b+h_2\\
		0\\
		0 \\
		b+\frac{\rho_1}{\rho_2}h_1\\
		0\\
	\end{bmatrix}\hspace{-0.1cm},\,\,
	\statevec{r}_2 =
		\begin{bmatrix}
		0 \\ 
		0\\
		b+h_2\\
		0 \\
		0\\
		b+\frac{\rho_1}{\rho_2}h_1\\
	\end{bmatrix}.
	\label{eq:two_layer_system_fluxes}
\end{equation}

The nonconservative term in \eqref{eq:balance_law_continuous} introduces some difficulties in the design of robust numerical methods. Additionally, in the continuous analysis, the nonconservative term renders the system only conditionally hyperbolic. This escape from the hyperbolic regime has been linked to the onset of Kevin-Helmholtz instabilities \cite{bouchut2010robust}, where high velocity difference in the shear layer leads to heavy mixing. A mechanism that cannot be replicated in the immiscible setting.

Another difficulty arises in the computation of the eigenvalues of the flux Jacobian. Even the one dimensional system yields a fourth order characteristic polynomial, hence a direct calculation of the eigenvalues is not feasible. Instead we follow the approach from Nycander and Döös \cite{nycander2003open} and assume that the baroclinic and barotropic modes do not interact for $u_1\approx u_2$ and $\rho_1\ll\rho_2$. This assumption leads to the eigenvalue approximation 
\begin{equation}
	\begin{aligned}
	\lambda^{\pm}_{ext} &= U_m \pm \sqrt{g(h_1+h_2)},\\
	\lambda^{\pm}_{int} &= U_c \pm \sqrt{g'\frac{h_1h_2}{h_1+h_2} \left(1 - \frac{\left(u_1-u_2\right)^2}{g'\left(h_1+h_2\right)}\right)},
	\end{aligned}
	\label{eq:eigenvalue_approximation}
\end{equation}
where $\lambda_{ext}$ and $\lambda_{int}$ denote the wave speeds of the barotropic and baroclinic mode, respectively written in terms of the velocities $U_m = \frac{h_1u_1+h_2u_2}{h_1+h_2}$, $U_c = \frac{h_1u_2+h_2u_1}{h_1+h_2}$ and the reduced gravity $\smash{g' = g(1 - \frac{\rho_1}{\rho_2})}$. For simplicity the one-dimensional case is considered as the extension to two dimensions is straightforward. From the approximation \eqref{eq:eigenvalue_approximation} we see that internal eigenvalues may become complex and the hyperbolic regime of the system is estimated by
\begin{equation}
	\frac{\left(u_1-u_2\right)^2}{g'\left(h_1+h_2\right)} \leq 1.
\end{equation}

In accordance with \cite{fjordholm2012energy} we can get the following bound on the maximum wave speeds
\begin{equation}
	|\lambda| \leq \left|\frac{h_1u_1+h_2u_2}{h_1+h_2}\right| \pm \sqrt{g(h_1+h_2)}.
	\label{eq:bound_eigenvalue}
\end{equation}

A particularly important property of numerical schemes for the two-layer SWE is the preservation of steady-state solutions that remain constant in time. The most important example of such a steady-state is the lake-at-rest condition 
\begin{equation}
	u_1,u_2,v_1,v_2 \equiv 0, \quad h_2+b \equiv \textrm{constant}, \quad h_1 \equiv \textrm{constant}.
	\label{eq:lake_at_rest_condition}
\end{equation}
As a wide range of applications are computed from small perturbations around the lake-at-rest condition, discrete preservation of \eqref{eq:lake_at_rest_condition} is essential. A scheme that is able to preserve \eqref{eq:lake_at_rest_condition} is denoted as well-balanced.

\section{Entropy analysis}\label{sec:continuous_entropy_analysis}
Stability estimates for numerical approximations are typically provided in terms of $L^2$-stability, where the discrete solution remains bounded within the $L^2\text{-norm}$, see \cite{kreiss1970,oliger1978}. While $L^2$-stability holds for linear problems, it fails to provide stability for general nonlinear systems \cite{gassner2021novel,merriam1989entropy}. An alternative for nonlinear systems is the use of entropy stability \cite{tadmor1987numerical}, where the solution is bounded by a strongly convex mathematical entropy function. The two-layer shallow water system \eqref{eq:two_layer_system_expanded} is equipped with such a convex entropy function $S=S(\statevec{u})$, corresponding to the total energy within the layers
\begin{equation}
	\begin{aligned}
	S(\statevec{u}) =& \dfrac{1}{2}\left(\rho_1 \left(h_1u_1^2+h_1v_1^2+gh_1^2\right) + \rho_2 \left(h_2u_2^2+h_2v_2^2+gh_2^2\right)\right)\\
	&+ \rho_2gh_2b + \rho_1gh_1(b+h_2),
	\end{aligned}
	\label{eq:entropy_function}
\end{equation}
together with the corresponding entropy flux
\begin{equation}
	\begin{aligned}
	\spacevec{f}^{\,S} = 
	&\rho_1\left(\frac{1}{2}h_1u_1\left(u_1^2 + v_1^2\right) + gh_1u_1\left(h_1 + b\right)\right)\\
   +&\rho_2\left(\frac{1}{2}h_2u_2\left(u_2^2 + v_2^2\right) + gh_2u_2\left(h_2 + b\right)\right)
   +g\rho_1h_1h_2(u_1 + u_2).
   \end{aligned}
\end{equation}
Differentiation of the entropy function with respect to the conservative variables yields the entropy variables 
\begin{equation}
	\statevec{w}^T = S_u =  \left[\begin{matrix}\rho_{1}\left( g \left(h_{1} + h_{2} + b \right) - \frac{1}{2} \left( u_{1}^{2} + v_{1}^{2}\right)\right)\\
	\rho_{1} u_{1}\\
	\rho_{1} v_{1}\\
	\rho_{2}\left(g\left(h_{1} \frac{\rho_{1}}{\rho_2} + h_{2} + b \right) - \frac{1}{2} \left(u_{2}^{2} + v_{2}^{2}\right)\right)\\\rho_{2} u_{2}\\ \rho_{2} v_{2}\end{matrix}\right],
	\label{eq:entropy_variables}
\end{equation}
which are used to obtain a relation between the physical and entropy fluxes as well as the nonconservative terms
\begin{equation}
	\spacevec{\nabla}_x\cdot\spacevec{f}^{\,S} = \statevec{w}^T\left(\spacevec{\nabla}_x\cdot\bigstatevec{f} + \statevecGreek{\phi}\circ\left(\vec{\nabla}_x\cdot\bigstatevec{r}\,\right)\right).
	\label{eq:entropy_flux_def}
\end{equation}
Furthermore, the entropy flux potential is defined as
\begin{equation}
	\spacevec{\psi}\left(\statevec{u}\right) = \statevec{w}^T\bigstatevec{f} - \spacevec{f}^{\,S}.
	\label{eq:entropy_flux_potential}
\end{equation}
From these compatibility relations, contracting smooth solutions of \eqref{eq:balance_law_continuous} with the entropy variables $\statevec{w}$ yields a scalar conservation law for the entropy function
\begin{equation}
	S_t + \spacevec{\nabla}_x\cdot\spacevec{f}^{\,S} = 0.
	\label{eq:entropy_conservation_law}
\end{equation}
We integrate the equation over the domain $\Omega$ and apply Gauss' law to rewrite the entropy flux as a surface integral and obtain the entropy conservation law \eqref{eq:entropy_conservation_law} in integral form
\begin{equation}
	\int\limits_{\Omega}S_t \operatorname{dV} + \int\limits_{\partial \Omega}\spacevec{f}^{\,S}\cdot\spacevec{n} \dS = 0.
	\label{eq:entropy_conservation_law_integral_form}
\end{equation}
In the case of discontinuities entropy must be dissipated at shocks and solutions must instead satisfy the entropy inequality
\begin{equation}
	\int\limits_{\Omega}S_t \operatorname{dV} + \int\limits_{\partial \Omega}\spacevec{f}^{\,S} \cdot \spacevec{n} \dS \leq 0.
	\label{eq:entropy_inequality}
\end{equation} 

To prove entropy conservation of the numerical scheme, we will mimic the continuous case. Note, that the contraction of physical fluxes into entropy space depends on the chain rule, which in general does not hold discretely \cite{ranocha2019mimetic}. In order to obtain entropy fluxes in the discrete setting, we therefore employ an alternative strategy based on entropy conservative numerical finite volume fluxes, that will be described in Section \ref{sec:entropy_conservation}.

\section{Split-Form DGSEM on curvinlinear elements}\label{sec:split_form_dgsem}
% Basic Idea
In this section we derive the split-form DGSEM for the nonconservative system \eqref{eq:balance_law_continuous}, that will provide a baseline scheme from which we then develop our entropy conservative and stable approximations.
% Structure of the section
To generalize the formulation to curvilinear elements we first introduce a mapping between physical and reference space. We then formulate the DGSEM for the nonconservative system and replace the volume integrals with a flux-differencing formulation to obtain the split-form DGSEM \cite{gassner2016split}.
\subsection{Mapping the equations}
We first subdivide the physical domain $\Omega$ into $K$ non-overlapping unstructured quadrilateral elements. Each element is mapped from physical space onto a unit square element $E=[-1,1]^2$ in computational space using transfinite interpolation with linear blending \cite{kopriva2009implementing}. The mapping is given as
\begin{equation}
	\vec{x} = \mathcal{X}(\vec{\xi}) \equiv X\hat{x} + Y\hat{y}
\end{equation}
 with the coordinates in physical space $\vec{x} = (x, y)^T = (x_1, x_2)$ and computational space $\vec{\xi} = (\xi, \eta)^T = (\xi^1, \xi^2)$, respectively. From the mapping we directly calculate the covariant basis vectors
 \begin{equation}
	\begin{aligned}
		\vec{a}_1 &= \frac{\partial \mathcal{X}}{\partial \xi} = X_{\xi}\hat{x} + Y_{\eta}\hat{y}\\
		\vec{a}_2 &= \frac{\partial \mathcal{X}}{\partial \eta} = X_{\eta}\hat{x} + Y_{\xi}\hat{x}\\
		\vec{a}_3 &= \hat{z}
	\end{aligned}
\end{equation}
that are tangential to the coordinate lines in computational space. The covariant basis vectors are then used to compute the Jacobian of the mapping
\begin{equation}
	J = \vec{a}_1 \cdot (\vec{a}_2 \times \vec{a}_3) = X_{\xi}Y_{\eta} - Y_{\xi}X_{\eta}
\end{equation}
as well as the contravariant basis vectors normal to the coordinate lines
\begin{equation}
	\begin{aligned}
		J\vec{a}_1 &= \vec{a}_2 \times \vec{a}_3 = Y_{\eta}\hat{x} - X_{\eta}\hat{y}, \\
		J\vec{a}_2 &= \vec{a}_3 \times \vec{a}_1 = -Y_{\xi}\hat{x} + X_{\xi}\hat{y}.
		\label{eq:metric_terms}
	\end{aligned}
\end{equation}

To map the system of equations into computational space, we first define a block matrix with the metric terms \eqref{eq:metric_terms} together with a $6\times6$ identity matrix $\normalmatrix{I}$ to simplify the notation
\begin{equation}
	\mathfrak{M} = \begin{bmatrix}
		Ja_1^1\normalmatrix{I} & Ja_1^2\normalmatrix{I} \\
		Ja_2^1\normalmatrix{I} & Ja_2^2\normalmatrix{I}
	\end{bmatrix}.
\end{equation}
We transform the  divergence operator into computational space
\begin{equation}
	\spacevec{\nabla}_x \cdot \bigstatevec{f} = 
	%\frac{1}{J} \spacevec{\nabla}_{\xi} \cdot \left(\mathfrak{M}^T \bigstatevec{f}\right) =
	 \frac{1}{J} \spacevec{\nabla}_{\xi} \cdot \bigcontravec{f},
	\quad 
	\statevecGreek{\phi} \circ \left(\spacevec{\nabla}_{x} \cdot \bigstatevec{r}\,\right) =
	\statevecGreek{\phi} \circ \left(\spacevec{\nabla}_{\xi} \cdot \bigcontravec{r}\,\right)
	\label{eq:transform_flux_div}
\end{equation}
using the contravariant block vectors for the flux and nonconservative terms
\begin{equation}
	\bigcontravec{f} = \mathfrak{M}^T \bigstatevec{f}, \quad \bigcontravec{r} = \mathfrak{M}^T \bigstatevec{r}.
\end{equation}
Applying transformation \eqref{eq:transform_flux_div} to \eqref{eq:balance_law_continuous} then yields the transformed balance law in computational space
\begin{equation}
	J\statevec{u}_t + \vec{\nabla}_{\xi} \cdot \bigcontravec{f} 
	+ \statevecGreek{\phi} \circ \left(\spacevec{\nabla}_{\xi} \cdot \bigcontravec{r}\,\right) = 0.
	\label{eq:balance_law_reference_space}
\end{equation}

In order to obtain a free-stream preserving numerical method, the divergence of a constant flux must vanish such that the metric identities 
\begin{equation}
	\frac{\partial}{\partial \xi}J\spacevec{a}^{\,1} + \frac{\partial}{\partial \eta} J\spacevec{a}^{\,2} = 0
	\label{eq:metric_identity}
\end{equation}
are satisfied discretely. That the metric identities hold in the discrete setting is a prerequisite for the preservation of steady-state solutions and the entropy conservation proof presented later in this work. In two space dimensions this is guaranteed for isoparametric boundaries, where the same polynomial order (or lower) from the DGSEM approximation is use to construct the mapping \cite{kopriva2006metric}.

\subsection{Nodal discontinuous Galerkin Spectral Element method}
The numerical scheme developed in this work is based on a standard nodal collocation DGSEM formulation as described in the literature, see \cite{kopriva2009implementing,hesthaven2007nodal}. In the following we formulate our DGSEM and extend it for nonconservative terms analogous to the numerical schemes developed in \cite{renac2019entropy,franquet2012runge}.

We begin with the derivation of a weak formulation of \eqref{eq:balance_law_reference_space}, where we multiply the transformed equation with test functions $\statevec{\varphi}$ and integrate over the domain $\Omega$. For nonconservative systems the definition of weak solutions is rather difficult as the theory of distributions does not apply and classical conservative approaches may recover incorrect shock speeds. Instead we follow the derivation provided by Franquet and Perrier \cite{franquet2012runge}, where a weak formulation is obtained from integration-by-parts for the conservative fluxes, while the nonconservative term is defined as a Borel measure according to theory of DalMaso, LeFloch and Murat \cite{dal1995definition}. For smooth solutions this measure corresponds to classical integration, while at discontinuities it must be evaluated as a line integral depending on a family of paths $\upsilon$. The resulting weak formulation is then given by
\begin{equation}
	\iprod{J\statevec{u}_t,\statevecGreek{\varphi}} 
	+ 
	\int\limits_{\partial E} \statevecGreek{\varphi}^T \left(\bigstatevec{f} \cdot \vec{n}\right) \hat{s} \,\text{d}S 
	-
	\iprod{\bigcontravec{f},\spacevec{\nabla}_{\xi}\statevecGreek{\varphi}} 
	+\int\limits_{\partial E} \statevecGreek{\varphi}^T \left(\statevecGreek{\phi}\circ\bigstatevec{r}\,\right)^{\diamond}\cdot\spacevec{n} \hat{s} \,\text{d}S
	+
	\iprod{\statevecGreek{\phi}\circ\left(\spacevec{\nabla}_{\xi}\cdot\bigcontravec{r}\,\right),\statevecGreek{\varphi}}
	= 0,
	\label{eq:weak_form_continuous}
\end{equation}
where $\hat{s}$ denotes the differential surface element given by
\begin{equation}
	\begin{aligned}
		\xi = \pm 1 : \hat{s}(\eta) = \left|J\spacevec{a}^{\,1}(\pm 1,\eta)\right|, \quad \eta = \pm 1 : \hat{s}(\xi) = \left|J\spacevec{a}^{\,2}(\xi,\pm 1)\right|,
	\end{aligned}
\end{equation}
and $\vec{n}$ is the outwards pointing unit normal vector in the physical space
\begin{equation}
	\spacevec{n} = \frac{J\spacevec{a}^{\,i}}{\hat{s}}, \quad i=1,2.
\end{equation} 
We introduce a notation where at each interface we denote the state of the primary element as $``-"$ and the state of the secondary (neighboring) element as $``+"$. Accordingly, the physical normal vector $\spacevec{n}$ is defined to point from $``-"$ to $``+"$, which the following relation for the normal fluxes of neighboring elements at an interface
\begin{equation}
	\vec{n} = \vec{n}^{\,-} = -\vec{n}^{\,+}.
	\label{eq:defintion_normal_vector_pm}
\end{equation}
Furthermore, we introduced the inner product notation on the reference element, given for state and block vectors
\begin{equation}
	\iprod{\statevec{u},\statevec{v}} = \int\limits_{E} \statevec{u}^T \statevec{v} \operatorname{d\xi}\operatorname{d\eta}, \qquad
	\iprod{\bigstatevec{f},\bigstatevec{g}} = \int\limits_{E}\sum_{i=1}^2 \statevec{f}_i^{\,T} \statevec{g}_i \operatorname{d\xi}\operatorname{d\eta}
	\label{eq:inner_product}
\end{equation}
respectively.

To account for the definition of the nonconservative term, we further introduced the surface numerical nonconservative term $\statevecGreek{\varphi}^T \left(\statevecGreek{\phi}\circ\bigstatevec{r}\,\right)^{\diamond}$ that satisfies the consistency condition
\begin{equation}
 \left(\statevecGreek{\phi}\circ\bigstatevec{r}\,\right)^{\diamond}(\statevec{u},\statevec{u}) = 0
\end{equation}
and the path-conservative property \cite{pares2006numerical} so that at each element interface we obtain
\begin{equation}
 \left(\statevecGreek{\phi}\circ\bigstatevec{r}\,\right)^{\diamond}(\statevec{u}^-,\statevec{u}^+)\cdot\spacevec{n}^{\,-}
 +
 \left(\statevecGreek{\phi}\circ\bigstatevec{r}\,\right)^{\diamond}(\statevec{u}^+,\statevec{u}^-)\cdot\spacevec{n}^{\,+}=
	\int\limits_0^1 {A}^{NC}(\upsilon(s;\statevec{u}^-,\statevec{u}^+))\partial_s \upsilon(s;\statevec{u}^-,\statevec{u}^+)\operatorname{ds},
\label{eq:path_conservative_property}
\end{equation}
where ${A}^{NC}$ denotes the flux Jacobian of the nonconservative subsystem and $\upsilon$ is a path connecting neighboring states $\statevec{u}^-$ and $\statevec{u}^+$ across a discontinuity, for complete details see \cite{pares2006numerical}. The numerical nonconservative term in \eqref{eq:weak_form_continuous} is therefore equivalent to a fluctuation form \cite{pares2006numerical}, where for each element $\left(\statevecGreek{\phi}\circ\bigstatevec{r}\,\right)^{\diamond}\cdot\spacevec{n}$ recovers the contributing fluctuations at the interface.

Next, we approximate the solution with a local tensor-product basis of polynomial degree $N$
\begin{equation}
	u \approx U = \sum\limits_{i,j=0}^N U_{ij} \ell_i(\xi) \ell_j(\eta),
\end{equation}
where the basis is spanned by one-dimensional nodal Lagrange basis functions
\begin{equation}
	\ell_j(\xi) = \prod\limits_{i=0,i \neq j}^N \frac{\xi - \xi_i}{\xi_j - \xi_i}, \quad i,j=0,...,N,
	\label{eq:Lagrange_basis}
\end{equation} 
on the interval $\xi \in [-1,1]$ with $N+1$ interpolation nodes $\{\xi_i\}_{i=0}^N$, located at the Legendre-Gauss-Lobatto (LGL) points. 
In the following, interpolated quantities at the LGL nodes are denoted with capital letters and the interpolation of a function by $G = \mathbb{I}^N(g)$. We then use the basis functions to define the discrete derivative operator
\begin{equation}
	\mathcal{D}_{ij}:=\left.\frac{\partial\ell_j}{\partial\xi}\right|_{\xi=\xi_i}, \quad i,j=0,...,N,
	\label{eq:Differentiation_matrix}
\end{equation}
apply a LGL quadrature rule to approximate the integrals present in the weak formulation \eqref{eq:weak_form_continuous} and collocate the interpolation and quadrature nodes. This yields the diagonal mass matrix
\begin{equation}  
	\mathcal{M} = \text{diag}(\omega_{\hspace{0.5pt}0},...,\omega_{N}),
	\label{eq:mass_matrix}
\end{equation}
with the LGL quadrature weights $\{\omega_i\}_{i=0}^N$. Using the Kronecker delta property of the Lagrange basis functions \eqref{eq:Lagrange_basis} we then introduce the discrete inner product notation
\begin{equation}
	\iprod{f,g}_N = \sum_{i,j=0}^N f_{ij} g_{ij} \omega_{i}\omega_{j} \equiv \sum_{i,j=0}^N f_{ij} g_{ij} \omega_{ij}.
	\label{eq:discrete_inner_product}
\end{equation}
This specific choice of interpolation and differentiation operators possesses the diagonal norm SBP property for all polynomial orders $N$ \cite{gassner2013skew}
\begin{equation}
	(\mathcal{M}\mathcal{D}) + (\mathcal{M}\mathcal{D})^T = \mathcal{Q} + \mathcal{Q}^T = \mathcal{B},
	\label{eq:SBP_integration_by_parts}
\end{equation}
where we introduced the SBP matrix $\mathcal{Q}$ and the boundary matrix $\mathcal{B}$
\begin{equation}
	\mathcal{Q}=\mathcal{M}\mathcal{D}, \qquad \mathcal{B} = \text{diag}(-1,0,...,0,1).
	\label{eq:definition_sbp_matrices}
\end{equation}
This property will be necessary to prove entropy stability, as it allows us to mimic integration-by-parts and enables the extended Gauss Law in the discrete case \cite{kopriva2017polynomial}.

We introduce polynomial approximations for the solution variables, fluxes, test functions and the nonconservative terms as well as LGL quadrature to obtain a discrete version of \eqref{eq:weak_form_continuous}
\begin{equation}
	\begin{aligned}
	\iprodN{\mathbb{I}^N(J)\statevec{U}_t,\statevecGreek{\varphi}} 
	+ 
	&\int\limits_{\partial E,N} \statevecGreek{\varphi}^T \statevec{F}_n \hat{s} \,\dS - \iprodN{\bigcontravec{F} ,\spacevec{\nabla}_{\xi}\statevecGreek{\varphi}} 
	\\+
	&\int\limits_{\partial E,N} \statevecGreek{\varphi}^T \left(\statevecGreek{\Phi}\circ\bigstatevec{R}\right)^{\diamond}\cdot\spacevec{n} \hat{s} \,\text{d}S
	+
	\iprodN{\statevecGreek{\Phi}\circ\left(\spacevec{\nabla}_{\xi}\cdot\mathbb{I}^N(\bigcontravec{R})\right),\statevecGreek{\varphi}}
	= 0,
	\end{aligned}
\end{equation}
where capital letters denote the discrete approximation and the surface fluxes $\smash{\bigstatevec{F}\cdot \vec{n}}$ and numerical nonconservative terms ${(\statevecGreek{\Phi}\circ\bigstatevec{R})^{\diamond}\cdot\spacevec{n}}$ are replaced by the surface normal fluxes $\smash{\statevec{F}_n}$ and surface numerical nonconservative terms $\smash{(\statevecGreek{\Phi}\circ\bigstatevec{R})^{\diamond}_n}$. Since the physical fluxes are not uniquely defined at the discontinuous interfaces, we replace the surface normal flux $\statevec{F}_n$ by a surface numerical flux $\statevec{F}_n^*$, to resolve the discontinuity such that we obtain the discrete weak formulation
\begin{equation}
	\begin{aligned}
		\iprodN{\mathbb{I}^N(J)\statevec{U}_t,\statevecGreek{\varphi}} + 
		&\int\limits_{\partial E,N} \statevecGreek{\varphi}^T \statevec{F}^*_n \hat{s} \,\dS 
		- 
		\iprodN{\bigcontravec{F} ,\spacevec{\nabla}_{\xi}\statevecGreek{\varphi}} 
		\\+
		&\int\limits_{\partial E,N} \statevecGreek{\varphi}^T \left(\statevecGreek{\Phi}\circ\bigstatevec{R}\right)^{\diamond}_n \hat{s} \,\text{d}S
		+
		\iprodN{\statevecGreek{\Phi}\circ\left(\spacevec{\nabla}_{\xi}\cdot\mathbb{I}^N(\bigcontravec{R})\right),\statevecGreek{\varphi}}
		= 0.
	\end{aligned}
\end{equation}

We then mimic integration-by-parts with the the SBP property \eqref{eq:SBP_integration_by_parts} and apply the discrete extended Gauss Law from \cite{kopriva2017polynomial} on the flux terms to obtain the strong form DGSEM approximation of \eqref{eq:balance_law_reference_space}
\begin{equation}
	\begin{aligned}
		\iprodN{\mathbb{I}^N(J)\statevec{U}_t,\statevecGreek{\varphi}} + 
		&\int\limits_{\partial E,N} \statevecGreek{\varphi}^T \{ \statevec{F}_n^* - \statevec{F}_n\} \hat{s} \dS 
		+ \iprodN{\vec{\nabla}_{\xi}\cdot\mathbb{I}^N(\bigcontravec{F}) ,\statevecGreek{\varphi}} 
		\\+
		&\int\limits_{\partial E,N} \statevecGreek{\varphi}^T \left(\statevecGreek{\Phi}\circ\bigstatevec{R}\right)^{\diamond}_n \hat{s} \,\text{d}S
		+
		\iprodN{\statevecGreek{\Phi}\circ\left(\spacevec{\nabla}_{\xi}\cdot\mathbb{I}^N(\bigcontravec{R})\right),\statevecGreek{\varphi}}
		= 0.
	\end{aligned}
	\label{eq:strong_form_discrete}
\end{equation}

The method \eqref{eq:strong_form_discrete} serves as a baseline from which we will continue to construct an entropy stable approximation using a special combination of differencing operator, numerical flux, and numerical nonconservative term.

\subsection{Flux-differencing formulation}
To obtain an entropy stable discretization, we make use of another property that diagonal norm SBP operators have and rewrite the flux divergence term in the volume into a flux differencing formulation \cite{gassner2016split,carpenter2014entropy}
\begin{equation}
	\begin{aligned}
		\vec{\nabla}_{\xi}\cdot\mathbb{I}^N(\bigcontravec{F}) \approx
		\vec{\mathbb{D}} \cdot  \bigcontravec{F}^{\#} 
		=
		&2\sum_{m=0}^{N} \mathcal{D}_{im} \left(\bigstatevec{F}^{\#}(\statevec{U}_{ij}, \statevec{U}_{mj}) \cdot \avg{J\vec{a}^{\,1}}_{(i,m)j} \right)
		\\ +
		&2\sum_{m=0}^{N} \mathcal{D}_{jm}   \left(\bigstatevec{F}^{\#}(\statevec{U}_{ij}, \statevec{U}_{im}) \cdot \avg{J\vec{a}^{\,2}}_{i(j,m)} \right),
		\label{eq:split_form_flux}
	\end{aligned}
\end{equation}
with the consistent and symmetric two-point flux $\statevec{F}^{\#}$ and arithmetic mean values given by
\begin{equation}
	\avg{\cdot}_{(i,m)j} = \frac{1}{2} \left(\left(\cdot\right)_{ij} + (\cdot)_{mj}\right), \quad \avg{\cdot}_{i\left(j,m\right)} = \frac{1}{2} \left(\left(\cdot\right)_{ij} + \left(\cdot\right)_{im}\right).
\end{equation}

This formulation was originally found by Fisher et al. \cite{fisher2013discretely}, who showed that a diagonal norm SBP operator can be rewritten into a finite volume type flux-differencing formulation on a staggered grid. From a particular choice of the two-point volume flux, the formulation \eqref{eq:split_form_flux} recovers different alternative split-forms of the original partial differential equation that can improve robustness of the scheme \cite{gassner2016split}. The formulation in \eqref{eq:split_form_flux} also extends two-point finite-volume fluxes to high-order \cite{gassner2016split} and recovers an entropy conservative volume discretization 
if an entropy conservative two-point flux is used \cite{carpenter2014entropy,gassner2021novel}. Recently, this approach has been applied successfully to create entropy stable DG methods for the shallow water equation \cite{wintermeyer2017entropy,wu2021high,gassner2016well}, magnetohydrodynamics equations \cite{bohm2020entropy,rueda2023entropy} and the compressible Navier-Stokes equations \cite{winters2021construction,chan2022entropy}.

In \cite{renac2019entropy,waruszewski2022entropy} it was shown that the flux-differencing formulation can also be extended to nonconservative systems such that we can apply the same discretization technique on the conservative and nonconservative parts of the equation. This ensures that both terms are evaluated in the same way, which will be necessary to obtain entropy conservation and discrete preservation of steady-state solutions. Therefore, we introduce the following flux-differencing formulation for the nonconservative volume term
\begin{equation}
		\begin{aligned}
			\statevecGreek{\Phi}\circ\left(\spacevec{\nabla}_{\xi}\cdot\mathbb{I}^N(\bigcontravec{R})\right)
			\approx
			\spacevec{\mathbb{D}}\cdot(\statevecGreek{\Phi}\circ\bigcontravec{R})^{\#} = 
			&\sum_{m=0}^{N} 2\mathcal{D}_{im}\left( (\statevecGreek{\Phi}\circ\bigstatevec{R})^{\#}(\statevec{U}_{ij},\statevec{U}_{mj}) \cdot \avg{J\vec{a}^{\,1}}_{(i,m)j} \right)
			\\ +
			&\sum_{m=0}^{N} 2\mathcal{D}_{jm}\left( (\statevecGreek{\Phi}\circ\bigstatevec{R})^{\#}(\statevec{U}_{ij},\statevec{U}_{im}) \cdot \avg{J\vec{a}^{\,2}}_{i(j,m)} \right),
		\end{aligned}
	\label{eq:split_form_noncons}
\end{equation}
where $(\statevecGreek{\Phi}\circ\bigstatevec{R})^{\#}$ denotes a two-point volume numerical nonconservative term with a given structure
\begin{equation}
	\begin{aligned}
	(\statevecGreek{\Phi}\circ\bigstatevec{R})^{\#}(\statevec{U}^{-},\statevec{U}^{+}) = &\statevecGreek{\Phi}^{\#}(\statevec{U}^{-},\statevec{U}^{+})\circ\bigstatevec{R}^{\#}(\statevec{U}^{-},\statevec{U}^{+}) 
	+\statevecGreek{\Phi}^{\#}(\statevec{U}^{+},\statevec{U}^{-})\circ\bigstatevec{R}^{\#}(\statevec{U}^{+},\statevec{U}^{-}) 
	\\&- \left(
	\statevecGreek{\Phi}^{\#}(\statevec{U}^{-},\statevec{U}^{+}) +
	\statevecGreek{\Phi}^{\#}(\statevec{U}^{+},\statevec{U}^{-})
	\right) \circ \bigstatevec{R}(\statevec{U}^{-}),
	\end{aligned}
	\label{eq:numerical_noncons_structure}
\end{equation}
where $\statevecGreek{\Phi}^{\#}$ and $\bigstatevec{R}^{\#}$ are vector-valued functions.
It was shown in \cite[Lemma 2]{waruszewski2022entropy} that under the consistency conditions
\begin{equation}
	\begin{aligned}
	2\statevecGreek{\Phi}^{\#}(\statevec{U},\statevec{U}) &= \statevecGreek{\Phi}(\statevec{U}) \\
	\bigstatevec{R}^{\#}(\statevec{U},\statevec{U}) &= \bigstatevec{R}(\statevec{U})
	\end{aligned}
	\label{eq:numerical_noncons_consistency_cond2}
\end{equation}
the volume numerical nonconservative term \eqref{eq:numerical_noncons_structure} provides a consistent volume discretization of the nonconservative term. For more details about the flux-differencing formulation for nonconservative terms we refer to \cite{renac2019entropy,waruszewski2022entropy}.

Inserting the flux-differencing formulations \eqref{eq:split_form_flux} and \eqref{eq:split_form_noncons} into the discrete strong formulation \eqref{eq:strong_form_discrete} then yields the split-form DGSEM
\begin{equation}
	\begin{aligned}
	\iprodN{\mathbb{I}^N(J)\statevec{U}_t,\statevecGreek{\varphi}} + 
	&\int\limits_{\partial E,N} \statevecGreek{\varphi}^T \{ \statevec{F}_n^* - \statevec{F}_n\} \hat{s} \dS + \iprodN{\vec{\mathbb{D}}\cdot\bigcontravec{F}^{\#} ,\statevecGreek{\varphi}} 
	\\+
	&\int\limits_{\partial E,N} \statevecGreek{\varphi}^T \left(\statevecGreek{\Phi}\circ\bigstatevec{R}\right)^{\diamond}_n \hat{s} \,\text{d}S
	+
	\iprodN{\spacevec{\mathbb{D}}\cdot(\statevecGreek{\Phi}\circ\bigcontravec{R})^{\#},\statevecGreek{\varphi}} = 0.
	\end{aligned}
	\label{eq:split_formulation}
\end{equation}

With method \eqref{eq:split_formulation} we are now equipped to derive the necessary components $\statevec{F}^{\,\#}$, $\statevec{F}^{\,*}$ and $(\statevecGreek{\Phi} \circ \bigstatevec{R})^{\#}$ , $(\statevecGreek{\Phi} \circ \bigstatevec{R})^{\diamond}$ to first create an entropy conservative and well-balanced scheme, and from that, a scheme that is entropy stable.

\section{Entropy-Conservative DGSEM}\label{sec:entropy_conservation}
To show that the scheme is entropy conservative we use the approach described in \cite{bohm2020entropy,winters2021construction,wintermeyer2017entropy} and mimic the continuous entropy analysis of Section \ref{sec:continuous_entropy_analysis} in the discrete setting. First, we contract the split-form DGSEM \eqref{eq:split_formulation} into entropy space by setting the test function to the interpolated entropy variables $\statevec{W}$
\begin{equation}
	\begin{aligned}
	\iprodN{\mathbb{I}^N(J)\statevec{U}_t,\statevec{W}} + 
	&\int\limits_{\partial E,N} \statevec{W}^T \{ \statevec{F}_n^* - \statevec{F}_n\} \hat{s} \dS + \iprodN{\vec{\mathbb{D}}\cdot\bigcontravec{F}^{\#} ,\statevec{W}}  \\+
	&\int\limits_{\partial E,N} \statevec{W}^T \left(\statevecGreek{\Phi}\circ\bigstatevec{R}\right)^{\diamond}_n \hat{s} \,\text{d}S
	+
	\iprodN{\spacevec{\mathbb{D}}\cdot(\statevecGreek{\Phi}\circ\bigcontravec{R})^{\#},\statevec{W}} = 0.
	\end{aligned}
	\label{eq:split_formulation_entropy_space}	
\end{equation}

\subsection{Time derivative}
As we are interested in the semi-discrete analysis, we assume that the chain rule with respect to differentiation in time holds and examine the time derivative term in \eqref{eq:split_formulation_entropy_space} to obtain
\begin{equation}
	\iprodN{\mathbb{I}^N(J)\statevec{U}_t,\statevec{W}}
	=
	\sum\limits_{i,j=0}^{N} J_{ij}\omega_{ij}\statevec{W}_{ij}^T\frac{\operatorname{d}\statevec{U}_{ij}}{\operatorname{dt}}=
	\sum\limits_{i,j=0}^{N} J_{ij}\omega_{ij}\frac{\operatorname{d}S_{ij}}{\operatorname{dt}}
	= 
	\iprodN{\mathbb{I}^N(J)S_t,1}.
	\label{eq:discrete_entropy_analysis_time_derivative}
\end{equation}
From \eqref{eq:discrete_entropy_analysis_time_derivative} we see that the rate of entropy change for a single element depends solely on the remaining surface and volume contributions. To obtain the total entropy, we sum over the all  $K$ elements in the domain $\Omega$
\begin{equation}
	\frac{\operatorname{d\bar{S}}}{\operatorname{dt}} = \sum_{k=1}^K \iprodN{\mathbb{I}^N(J)^kS_t^k,1}.
	\label{eq:total_discrete_entropy}
\end{equation}

To demonstrate discrete entropy conservation, we will show that the entropy conservation law \eqref{eq:entropy_conservation_law} is satisfied in a discrete sense by the split-form approximation \eqref{eq:split_formulation}. Assuming a closed system, the second law of thermodynamics states that the entropy flux at the boundaries vanishes and the total entropy must be conserved
\begin{equation}
	\frac{\operatorname{d\bar{S}}}{\operatorname{dt}} = 0.
	\label{eq:total_discrete_entropy_conservation_statement}
\end{equation}

\subsection{Discrete entropy flux}
% this should move to chapter 5
The contraction of the physical flux nonconservative terms to the entropy flux in \eqref{eq:entropy_flux_def} is challenging as it depends on the chain rule, which in general does not hold in the discrete setting \cite{ranocha2019mimetic}. Therefore, we apply an alternative strategy to find a combination of numerical fluxes and numerical nonconservative terms that recover the entropy flux on a discrete level. To reduce the degrees of freedom in this derivation we assume a fixed discretization of the numerical nonconservative term
\begin{equation}
	(\statevecGreek{\Phi}\circ\bigstatevec{R})^{\diamond} 
	= \frac{1}{2}\statevecGreek{\Phi}^-\circ\jump{\bigstatevec{R}} 
	= \frac{1}{2}\statevecGreek{\Phi}^-\circ\left(\bigstatevec{R}^+ - \bigstatevec{R}^-\right),
	\label{eq:definition_numerical_noncons_term}
\end{equation} 
which can be derived from the path-conservative property, using a linear path and satisfies the consistency conditions \eqref{eq:numerical_noncons_consistency_cond2}. The task now reduces to finding an accompanying entropy conservative (EC) numerical flux $\bigstatevec{f}^{\,EC}$.

% FV
To derive such EC fluxes we require discrete entropy conservation in a finite volume context similar to \cite{winters2016affordable,derigs2018ideal}. These finite volume type numerical fluxes are then directly applicable to the split-form DGSEM \eqref{eq:split_form_flux} as the volume contribution relates to a subcell finite volume scheme and elements are coupled via numerical fluxes on the surface \cite{fisher2013discretely,gassner2016split}.
% Derivation
We consider a single interface between neighboring finite volume cells with distinct cell averages denoted by $``-"$ and $``+"$ and cell size $\Delta x$. We introduce a notation to define jumps and arithmetic averages of cell values across the interface
\begin{equation}
	\jump{\cdot} = (\cdot)^+ -(\cdot)^-, \quad \avg{\cdot} = \frac{1}{2}\left((\cdot)^+ + (\cdot)^-\right).
\end{equation}

The respective finite volume formulation of system \eqref{eq:balance_law_continuous} with the numerical nonconservative term \eqref{eq:definition_numerical_noncons_term} is given by
\begin{equation}
	\Delta x \frac{\partial}{\partial t}\statevec{u}^- = \statevec{f}^- - \statevec{f}^* - \statevecGreek{\phi}^-\circ\jump{\statevec{r}}
	\qquad
	\Delta x \frac{\partial}{\partial t}\statevec{u}^+ = \statevec{f}^*  - \statevec{f}^+ -  \statevecGreek{\phi}^+\circ\jump{\statevec{r}},
	\label{eq:fv_formulation_condition_1}
\end{equation}
at each side of the interface. To obtain an entropy conservation statement we follow the continuous analysis and contract \eqref{eq:fv_formulation_condition_1} with the entropy variables. As before, we assume time continuity such that the chain rule in time holds to obtain the rate of entropy change in each cell
\begin{equation}
	\Delta x \frac{\partial}{\partial t}\statevec{S}^- = (\statevec{w}^-)^T \left(\statevec{f}^- - \statevec{f}^* - \frac{\statevecGreek{\phi}^-\circ\jump{\statevec{r}}}{2}\right),
	\quad
	\Delta x \frac{\partial}{\partial t}\statevec{S}^+ = (\statevec{w}^+)^T\left(\statevec{f}^* - \statevec{f}^+ - \frac{\statevecGreek{\phi}^+\circ\jump{\statevec{r}}}{2}\right).
	\label{eq:fv_formulation_condition_1_contracted}
\end{equation}
We then sum over both cells to find the total discrete entropy change over the interface
\begin{equation}
	\Delta x \frac{\partial}{\partial t}\left(\statevec{S}^-+\statevec{S}^+\right) = \jump{\statevec{w}}^T\statevec{f}^* - \jump{\statevec{w}^T\statevec{f}} - \avg{\statevec{w}\circ\statevecGreek{\phi}}^T\jump{\statevec{r}}.
	\label{eq:fv_total_discrete_entropy}
\end{equation}
From \eqref{eq:fv_total_discrete_entropy} we see that a discrete version of the integral entropy conservation law \eqref{eq:entropy_conservation_law_integral_form} holds provided the right hand side recovers the jump of the entropy flux over the interface
\begin{equation}
	\jump{\statevec{w}}^T\statevec{f}^* - \jump{\statevec{w}^T\statevec{f}} - \avg{\statevec{w}\circ\statevecGreek{\phi}}^T\jump{\statevec{r}} \overset{!}{=} 
	- \jump{\spacevec{f}^{\,S}}.
	\label{eq:entropy_conservation_requirement}
\end{equation}
Manipulating \eqref{eq:entropy_conservation_requirement} and subsequently substituting the definition of the entropy potential \eqref{eq:entropy_flux_potential} we find an entropy conservation condition for the numerical flux
\begin{equation}
	\jump{\statevec{w}}^T\bigstatevec{f}^{\,EC} =  \jump{\spacevec{\psi}} - \avg{\statevec{w}\circ\statevecGreek{\phi}}^T\jump{\bigstatevec{r}}. 
	\label{eq:entropy_conservation_condition}
\end{equation}

Accordingly, a numerical flux that satisfies this condition in combination with the numerical nonconservative term \eqref{eq:definition_numerical_noncons_term} recovers the integral entropy conservation law \eqref{eq:entropy_conservation_law_integral_form} and is therefore defined as an EC numerical flux.

With this in mind we create an EC split-form DGSEM by selecting a proper combination of EC fluxes and nonconservative terms for both volume and surface contributions. In general the split-form offers flexibility in choosing different discretizations on the volume and the surface, which was used in \cite{wintermeyer2017entropy} to obtain a well-balanced and EC scheme for the standard SWE. 

However, we find that with the numerical nonconservative term \eqref{eq:definition_numerical_noncons_term} both properties can be achieved using the same numerical nonconservative term and EC flux for the volume and surface parts in \eqref{eq:split_formulation_entropy_space}. The numerical nonconservative term \eqref{eq:definition_numerical_noncons_term} can also be applied in the volume as it corresponds to the structure given in \eqref{eq:numerical_noncons_structure} when setting
\begin{equation}
	\statevecGreek{\Phi}^{\#}(\statevec{U}^{-},\statevec{U}^{+}) = \statevecGreek{\Phi}^-, \quad \bigstatevec{R}^{\#}(\statevec{U}^{-},\statevec{U}^{+}) = \bigstatevec{R}^+
\end{equation}
and satisfies the consistency conditions \eqref{eq:numerical_noncons_consistency_cond2}. This circumvents the need for different surface and volume fluxes, such that our method is given by
\begin{equation}
	\begin{aligned}
		\iprodN{\mathbb{I}^N(J)\statevec{U}_t,\statevec{W}} + 
		&\int\limits_{\partial E,N} \statevec{W}^T \{ \statevec{F}_n^{EC} - \statevec{F}_n\} \hat{s} \dS + \iprodN{\vec{\mathbb{D}}\cdot\bigcontravec{F}^{EC} ,\statevec{W}} 
		\\+
		&\int\limits_{\partial E,N} \statevec{W}^T \left(\statevecGreek{\Phi}\circ\bigstatevec{R}\right)^{\diamond}_n \hat{s} \,\text{d}S
		+
		\iprodN{\spacevec{\mathbb{D}}\cdot(\statevecGreek{\Phi}\circ\bigcontravec{R})^{\diamond},\statevec{W}}
		= 0.
	\end{aligned}
	\label{eq:split_formulation_ec_entropy_space}
\end{equation}

We then set the following EC numerical fluxes
\begin{equation}
	\begin{aligned}
		\statevec{F}_1^{EC} := 
		\begin{pmatrix}
			\avg{h_1u_1}_{(i,m),j} \\
			\avg{h_1u_1}_{(i,m),j}\avg{u_1}_{(i,m),j} + g\avg{h_1}^2_{(i,m),j} - \frac{1}{2}g\avg{h_1^2}_{(i,m),j}\\
			\avg{h_1u_1}_{(i,m),j}\avg{v_1}_{(i,m),j}\\
			\avg{h_2u_2}_{(i,m),j} \\
			\avg{h_2u_2}_{(i,m),j}\avg{u_2}_{(i,m),j} + g\avg{h_2}^2_{(i,m),j} - \frac{1}{2}g\avg{h_2^2}_{(i,m),j}\\
			\avg{h_2u_2}_{(i,m),j}\avg{v_2}_{(i,m),j}
		\end{pmatrix}, \\
		\statevec{F}_2^{EC} := 
		\begin{pmatrix}
			\avg{h_1v_1}_{(i,m),j} \\
			\avg{h_1v_1}_{(i,m),j}\avg{u_1}_{(i,m),j} \\
			\avg{h_1v_1}_{(i,m),j}\avg{v_1}_{(i,m),j} + 	g\avg{h_1}^2_{(i,m),j} - \frac{1}{2}g\avg{h_1^2}_{(i,m),j}\\
			\avg{h_2v_2}_{(i,m),j} \\
			\avg{h_2v_2}_{(i,m),j}\avg{u_2}_{(i,m),j}\\
			\avg{h_2v_2}_{(i,m),j}\avg{v_2}_{(i,m),j} + 	g\avg{h_2}^2_{(i,m),j} - \frac{1}{2}g\avg{h_2^2}_{(i,m),j}
		\end{pmatrix},
	\end{aligned}
	\label{eq:volume_flux_ec}
\end{equation}
which fulfill the entropy conservation condition \eqref{eq:entropy_conservation_condition}
\begin{equation*}
	\jump{\statevec{W}}^T \bigstatevec{F}^{EC} = \jump{\spacevec{\Psi}}-\avg{\statevec{W}\circ\statevecGreek{\Phi}}^T\jump{\bigstatevec{R}},
\end{equation*}
as shown in Appendix \ref{sec:entropy_condition_volume} for $\statevec{F}_1^{EC}$. This EC flux \eqref{eq:volume_flux_ec} corresponds to the EC volume flux found by Wintermeyer et al. \cite{wintermeyer2017entropy} for the standard SWE within each layer. The reason behind this is that the conservative part of the two-layer model \eqref{eq:two_layer_system_expanded} corresponds to the standard shallow water system in each layer and only the nonconservative terms differ.
\begin{remark}
	We note that due to this similarity the EC split-form DGSEM \eqref{eq:split_formulation_ec_entropy_space} can be directly applied to the standard SWE if the EC flux from \cite{wintermeyer2017entropy} and the numerical nonconservative term \eqref{eq:definition_numerical_noncons_term} are used.
\end{remark}

To demonstrate entropy conservation for approximation \eqref{eq:split_formulation_ec_entropy_space} we next examine the volume and surface contributions separately. We start with the volume parts and show that for the given choice of two-point volume flux \eqref{eq:volume_flux_ec} and numerical nonconservative term \eqref{eq:definition_numerical_noncons_term} they become the entropy flux evaluated at the boundary, when contracted into entropy space. Using the same combination of EC flux and numerical nonconservative term the total surface contribution then cancels in entropy space, which shows entropy conservation. Furthermore, we demonstrate that by using an analogous discretization for the pressure and nonconservative terms yields a method that is well-balanced.

\subsection{Volume contribution}

\begin{lemma}[Entropy contribution of the curvilinear volume terms]\label{lemma_volume_contribution}
	The curvilinear volume contributions of the split-form DGSEM \eqref{eq:split_formulation} with the EC volume flux \eqref{eq:volume_flux_ec} contracted into entropy space become the entropy flux evaluated at the interface.
	\begin{equation}
		\begin{aligned}
			\iprodN{\vec{\mathbb{D}}\cdot\bigcontravec{F}^{EC} ,\statevec{W}} + \iprodN{\spacevec{\mathbb{D}}\cdot(\statevecGreek{\Phi}\circ\bigcontravec{R})^{\diamond},\statevec{W}} = 		\int\limits_{\partial E,N} \left(\spacevec{F}^{\,S}\cdot\spacevec{n}\right) \hat{s} \dS
		\end{aligned}
	\end{equation}
\end{lemma}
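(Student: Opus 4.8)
The plan is to mimic, at the discrete level, the continuous manipulation that turns $\statevec{w}^T(\spacevec{\nabla}_x\cdot\bigstatevec{f} + \statevecGreek{\phi}\circ(\spacevec{\nabla}_x\cdot\bigstatevec{r}))$ into $\spacevec{\nabla}_x\cdot\spacevec{f}^{\,S}$, using the summation-by-parts property \eqref{eq:SBP_integration_by_parts} in place of the chain rule. First I would insert the flux-differencing formulas \eqref{eq:split_form_flux} and \eqref{eq:split_form_noncons} into the two volume inner products and expand them with the discrete inner product \eqref{eq:discrete_inner_product}. Because the nodes are collocated, every factor $\omega_i\mathcal{D}_{im}$ becomes an entry of the SBP matrix $\mathcal{Q}=\mathcal{M}\mathcal{D}$, so each volume term collapses to a double sum over node pairs in the $\xi$- and $\eta$-directions, with the two-point EC flux $\bigstatevec{F}^{EC}$ (resp.\ the volume nonconservative term $(\statevecGreek{\Phi}\circ\bigstatevec{R})^{\diamond}$) contracted against the averaged metrics $\avg{J\vec{a}^{\,1}}$, $\avg{J\vec{a}^{\,2}}$. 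Both contributions share this structure, so I would process them in parallel and work in a single direction (say $\xi$, fixed $j$), the other direction and the sum over $j$ being identical.

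The core algebraic step is the split $2\mathcal{Q}_{im} = \mathcal{B}_{im} + (\mathcal{Q}_{im}-\mathcal{Q}_{mi})$ coming from \eqref{eq:SBP_integration_by_parts}. In the conservative term the diagonal boundary part $\mathcal{B}$ selects $i=m\in\{0,N\}$ and, by consistency of $\bigstatevec{F}^{EC}$ and of the metric average, yields the pointwise face value $\statevec{W}^T(\bigstatevec{F}\cdot J\vec{a}^{\,1})$. For the skew part I would exploit the symmetry of $\bigstatevec{F}^{EC}$ and of $\avg{J\vec{a}^{\,1}}$ under $i\leftrightarrow m$ to rewrite the double sum as $-\tfrac12\sum_{i,m}(\mathcal{Q}_{im}-\mathcal{Q}_{mi})\jump{\statevec{W}}^T\bigstatevec{F}^{EC}$, and then invoke the entropy-conservation condition \eqref{eq:entropy_conservation_condition} to replace each pairwise contraction $\jump{\statevec{W}}^T\bigstatevec{F}^{EC}$ by a jump of the entropy potential together with the nonconservative remainder $\avg{\statevec{W}\circ\statevecGreek{\Phi}}^T\jump{\bigstatevec{R}}$. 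Using the row/column-sum identities $\sum_m\mathcal{Q}_{im}=0$ and $\sum_i\mathcal{Q}_{im}=\mathcal{B}_{mm}$, the potential piece telescopes to boundary values of $\spacevec{\Psi}$ at the faces.

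The reason for using the \emph{same} numerical nonconservative term \eqref{eq:definition_numerical_noncons_term} in the volume and on the surface now becomes visible: the nonconservative volume inner product $\iprodN{\spacevec{\mathbb{D}}\cdot(\statevecGreek{\Phi}\circ\bigcontravec{R})^{\diamond},\statevec{W}}$, after the same expansion, produces a double sum of exactly the form $\sum_{i,m}\mathcal{Q}_{im}(\statevec{W}\circ\statevecGreek{\Phi})_i^T\jump{\bigstatevec{R}}$ that is designed to annihilate the nonconservative remainder generated by \eqref{eq:entropy_conservation_condition} in the previous step. I would verify this cancellation with the pairing identity $\sum_{i,m}(\mathcal{Q}_{im}+\mathcal{Q}_{mi})(\statevec{W}\circ\statevecGreek{\Phi})_i^T\bigstatevec{R}_m = \sum_i\mathcal{B}_{ii}(\statevec{W}\circ\statevecGreek{\Phi})_i^T\bigstatevec{R}_i$ (again a consequence of \eqref{eq:SBP_integration_by_parts}), which converts the mismatch between $\avg{\statevec{W}\circ\statevecGreek{\Phi}}$ and the one-sided $(\statevec{W}\circ\statevecGreek{\Phi})^-$ into boundary contributions only.

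Collecting what survives, the boundary value $\statevec{W}^T(\bigstatevec{F}\cdot J\vec{a}^{\,1})$ and the telescoped potential $-\spacevec{\Psi}\cdot J\vec{a}^{\,1}$ combine through the entropy-potential definition \eqref{eq:entropy_flux_potential}, $\spacevec{f}^{\,S}=\statevec{w}^T\bigstatevec{f}-\spacevec{\psi}$, into $\spacevec{F}^{\,S}\cdot J\vec{a}^{\,1}$ at the $\xi=\pm1$ faces; adding the $\eta$-direction contribution and summing over the remaining quadrature index with the weights $\omega_j$ reproduces $\int_{\partial E,N}(\spacevec{F}^{\,S}\cdot\spacevec{n})\hat{s}\dS$, since $\hat{s}\spacevec{n}=J\vec{a}^{\,i}$. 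The step I expect to be the main obstacle is the bookkeeping of the curvilinear metric factors: because $\avg{J\vec{a}^{\,1}}$ and $\avg{J\vec{a}^{\,2}}$ sit \emph{inside} the telescoping sums, neither the potential telescoping nor the nonconservative cancellation closes on its own, and each leaves residual interior terms proportional to the discrete metric divergence $\partial_\xi(J\vec{a}^{\,1})+\partial_\eta(J\vec{a}^{\,2})$. These volume remainders are exactly what the discrete metric identities \eqref{eq:metric_identity} annihilate, so the identity holds only because the mapping is constructed to satisfy \eqref{eq:metric_identity}; isolating these terms and showing that the $\xi$- and $\eta$-contributions assemble into the metric divergence is the delicate part, whereas the purely one-dimensional telescoping is routine.
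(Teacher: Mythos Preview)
Your proposal is correct and follows essentially the same route as the paper's proof in Appendix~\ref{sec:proof_volume_contribution_entropy}: expand both volume terms, apply the SBP split $2\mathcal{Q}_{im}=\mathcal{Q}_{im}-\mathcal{Q}_{mi}+\mathcal{B}_{im}$, use symmetry of the two-point flux to produce $\jump{\statevec{W}}^T\bigstatevec{F}^{EC}$, invoke the entropy-conservation condition \eqref{eq:entropy_conservation_condition} so that the nonconservative remainder matches the symmetrized nonconservative volume contribution, telescope the potential via the row-sum property of $\mathcal{Q}$, and finally eliminate the leftover interior metric terms with the discrete identities \eqref{eq:metric_identity}. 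The only cosmetic difference is the order in which the entropy-potential definition \eqref{eq:entropy_flux_potential} is used---the paper substitutes $\statevec{W}^T\bigstatevec{F}=\spacevec{\Psi}+\spacevec{F}^{\,S}$ immediately on the $\mathcal{B}$-part, whereas you defer it to the end---but the ingredients and the identification of the metric-identity step as the delicate curvilinear obstacle are identical.
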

\begin{proof}
	See Appendix \ref{sec:proof_volume_contribution_entropy}. %\qed
\end{proof}
\begin{remark}
	The proof requires the SBP property \eqref{eq:SBP_integration_by_parts}, the entropy conservation condition \eqref{eq:entropy_conservation_condition}, and that the metric identities \eqref{eq:metric_identity} are satisfied discretely.
\end{remark}

The result from \mbox{Lemma \ref{lemma_volume_contribution}} shows that the volume contributions from the flux and nonconservative terms cancel in entropy space and become the entropy flux evaluated at the boundary. Therefore, the volume parts do not contribute to the total discrete entropy and the entropy balance is entirely dependent on the surface contributions.

\subsection{Surface contribution}
On the surface we select the same EC numerical flux as in the volume. From Lemma \ref{lemma_volume_contribution} we know that the volume contributions move onto the surface such that the change in discrete entropy depends solely on the surface terms. So, we assemble the surface contributions for a single element 
\begin{equation}
		\begin{aligned}
			\Gamma_k =
			&\int\limits_{\partial E_k,N} \statevec{W}^T \{ \statevec{F}_n^{EC} - \statevec{F}_n\} \hat{s} \dS + \int\limits_{\partial E_k,N}\left( \spacevec{F}^{\,S}\cdot\spacevec{n}\right)\hat{s}\dS\\
			+ 
			&\int\limits_{\partial E,N} \statevec{W}^T \left(\statevecGreek{\Phi}\circ\bigstatevec{R}\right)^{\diamond}_n \hat{s} \,\text{d}S , \hspace{3cm} k = 1,...,K.
		\end{aligned}
		\label{eq:surface_contribution_element}
\end{equation}

To obtain the total discrete entropy as described by \eqref{eq:total_discrete_entropy} we sum over all elements $k=1,...,K$. Due to the discontinuous approximation, summing over the elements creates jumps in the fluxes, nonconservative terms and entropy variables, whereas the numerical surface fluxes are defined uniquely at each interface. 

After summing over the elements, we examine the terms in the surface integrals \eqref{eq:surface_contribution_element} separately. Starting with the physical flux term we obtain jumps in the flux and entropy variables and use that the numerical flux is unique at the interface to have
\begin{equation}
	\sum\limits_{k=1}^{K}\int\limits_{\partial E_k,N} \statevec{W}^T \{ \statevec{F}_n^{EC} - \statevec{F}_n\} \hat{s} \dS
	= 
	\sum_{\text{faces}} \int\limits_{N}
	\left\{
	\jump{\statevec{W}}^T \bigstatevec{F}^{EC} - \jump{\statevec{W}^T \bigstatevec{F}} 
	\right\} \cdot\spacevec{n}\hat{s} \dS.
	\label{eq:surface_sum_physical_flux}
\end{equation}
In a similar way summing over the second integral generates a jump in the entropy flux
\begin{equation}
	\sum\limits_{k=1}^{K}\int\limits_{\partial E_k,N} \left(\spacevec{F}^{\,S}\cdot\spacevec{n}\right) \hat{s} \dS
	=
	\sum_{\text{faces}} \int\limits_{N} \jump{\spacevec{F}^{\,S}}\cdot\spacevec{n} \hat{s} \dS.
	\label{eq:surface_sum_entropy_flux}
\end{equation}
The third integral in \eqref{eq:surface_contribution_element} contains the the numerical nonconservative term \eqref{eq:definition_numerical_noncons_term}, which is non-unique at the interface. In the following we examine the total surface contribution of the nonconservative parts after summing over the interfaces.

\begin{lemma}[Surface contribution of the numerical nonconservative term]\label{lemma:diamond_flux}
	Setting the numerical nonconservative term to \eqref{eq:definition_numerical_noncons_term}
	\begin{equation*}
		\left(\statevecGreek{\Phi}\circ\bigstatevec{R}\right)^{\diamond} = \frac{1}{2}\statevecGreek{\Phi}^-\circ\jump{\bigstatevec{R}}
	\end{equation*}
	yields the total surface contribution of the nonconservative terms
	\begin{equation}
		\sum\limits_{k=1}^{K}\int\limits_{\partial E_k,N} \statevec{W}^T
		\left(\statevecGreek{\Phi}\circ\bigstatevec{R}\right)_n^{\diamond} 
		\hat{s} \dS 
		= 
		\sum_{\text{faces}} \int\limits_{N}
		\avg{\statevec{W}\circ\statevecGreek{\Phi}}^T\jump{\bigstatevec{R}}\cdot\spacevec{n}\hat{s} \dS.
	\end{equation}
\end{lemma}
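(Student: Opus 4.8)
The plan is to pass from the element-wise sum on the left-hand side to a sum over faces, exploiting that each interior face is shared by exactly two elements while the numerical nonconservative term---unlike a numerical surface flux---is deliberately \emph{non-unique} at the interface. First I would isolate a single interior face and record the two fluctuation contributions it receives. From the primary element the integrand is $\statevec{W}^{-T}\left(\statevecGreek{\Phi}\circ\bigstatevec{R}\right)^{\diamond}(\statevec{U}^{-},\statevec{U}^{+})\cdot\spacevec{n}^{-}$, while from the secondary element it is $\statevec{W}^{+T}\left(\statevecGreek{\Phi}\circ\bigstatevec{R}\right)^{\diamond}(\statevec{U}^{+},\statevec{U}^{-})\cdot\spacevec{n}^{+}$, where the arguments are swapped in the second term to reflect the reversed roles of ``local'' and ``neighbor'' as seen from that element.

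Next I would insert the definition \eqref{eq:definition_numerical_noncons_term} into both contributions. For the primary element this gives directly $\tfrac{1}{2}\statevecGreek{\Phi}^{-}\circ\jump{\bigstatevec{R}}$, while for the secondary element the swapped arguments reverse the jump, $\bigstatevec{R}^{-}-\bigstatevec{R}^{+}=-\jump{\bigstatevec{R}}$, producing $-\tfrac{1}{2}\statevecGreek{\Phi}^{+}\circ\jump{\bigstatevec{R}}$. The key step is then to combine this sign flip of the jump with the opposite orientation of the outward normals, $\spacevec{n}^{+}=-\spacevec{n}^{-}$, from \eqref{eq:defintion_normal_vector_pm}: the two minus signs cancel, so the secondary contribution becomes $+\tfrac{1}{2}\statevecGreek{\Phi}^{+}\circ\jump{\bigstatevec{R}}$ contracted with the common normal $\spacevec{n}\equiv\spacevec{n}^{-}$. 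This is the crucial symmetrization, whereby the one-sided factor $\statevecGreek{\Phi}^{-}$ built into the definition is promoted to a symmetric average once both sides of the face are accounted for.

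To finish, I would apply the elementary rearrangement of the Hadamard product, $\statevec{W}^{T}\left(\statevecGreek{\Phi}\circ\bigstatevec{R}\right)=\left(\statevec{W}\circ\statevecGreek{\Phi}\right)^{T}\bigstatevec{R}$, to rewrite each contracted term as $\tfrac{1}{2}\left(\statevec{W}^{\pm}\circ\statevecGreek{\Phi}^{\pm}\right)^{T}\jump{\bigstatevec{R}}\cdot\spacevec{n}$; summing the two sides then produces exactly $\avg{\statevec{W}\circ\statevecGreek{\Phi}}^{T}\jump{\bigstatevec{R}}\cdot\spacevec{n}$ on each interior face, and summing over all faces yields the claimed identity. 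I would also note that physical boundary faces contribute nothing under the closed-system hypothesis invoked for \eqref{eq:total_discrete_entropy_conservation_statement} (or are carried by the prescribed boundary state), so the reindexing from elements to faces is clean. The main obstacle is purely the orientation bookkeeping: one must track simultaneously the sign flip induced by swapping the arguments of $(\statevecGreek{\Phi}\circ\bigstatevec{R})^{\diamond}$ and the sign flip of the normal vector, and verify that their product is $+1$ so that the asymmetric definition \eqref{eq:definition_numerical_noncons_term} collapses into the symmetric average $\avg{\statevec{W}\circ\statevecGreek{\Phi}}$. Everything else is routine.
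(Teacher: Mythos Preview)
Your proposal is correct and follows essentially the same argument as the paper: both reindex the element sum to a face sum, insert the definition \eqref{eq:definition_numerical_noncons_term} on each side, and observe that the sign flip from swapping the jump arguments cancels the sign flip of the outward normal, yielding the symmetric average $\avg{\statevec{W}\circ\statevecGreek{\Phi}}$. Your write-up is in fact slightly more explicit than the paper's, spelling out the Hadamard rearrangement and the treatment of boundary faces, but the underlying mechanism is identical.
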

\begin{proof}
	The total surface contribution from the nonconservative terms in \eqref{eq:surface_contribution_element} is given by
	\begin{equation}
		\sum\limits_{k=1}^{K}\int\limits_{\partial E_k,N} \statevec{W}^T
		(\statevecGreek{\Phi}\circ\bigstatevec{R})_n^{\diamond} 
		\hat{s} \dS.
		\label{eq:surface_contribution_noncons}
	\end{equation}
	Introducing the definition for the numerical nonconservative term \eqref{eq:definition_numerical_noncons_term} into \eqref{eq:surface_contribution_element} and examining the nonconservative contribution, where \mbox{$``-"$} denotes the primary and $``+"$ the secondary state, yields
	\begin{equation}
		\sum\limits_{k=1}^{K}\int\limits_{\partial E_k,N} \statevec{W}^T
		(\statevecGreek{\Phi}\circ\bigstatevec{R})_n^{\diamond} 
		\hat{s} \dS
		=
		\sum\limits_{k=1}^{K}\int\limits_{\partial E_k,N} \statevec{W}^T
		\left(
		\frac{1}{2}\statevecGreek{\Phi}^-\circ\jump{\bigstatevec{R}}\right)\cdot\spacevec{n}
		\hat{s} \dS.
	\end{equation}

	To obtain the total interface contribution, we must take contributions from neighboring cells into account. Since the nonconservative term is not symmetric, we need to consider the different normal directions of neighboring cells as defined in \eqref{eq:defintion_normal_vector_pm}.  Summing over all elements, we then obtain the total contribution from the interfaces
	\begin{equation}
		\begin{aligned}
			&\sum\limits_{k=1}^{K}\int\limits_{\partial E_k,N} \statevec{W}^T
			(\statevecGreek{\Phi}\circ\bigstatevec{R})_n^{\diamond}\hat{s}\dS
			\\=
			&\sum\limits_{faces}\int\limits_{N}\frac{1}{2} \left(\left(\statevec{W}^-\right)^T
			\left(
			\statevecGreek{\Phi}^-\circ\jump{\bigstatevec{R}}\right)
			+
			\left(\statevec{W}^+\right)^T
			\left(
			\statevecGreek{\Phi}^+\circ\jump{\bigstatevec{R}}\right)
			\right)
			\cdot\spacevec{n}\hat{s} \dS
			\\=
			&\sum\limits_{faces}\int\limits_{N} \avg{\statevec{W}\circ\statevecGreek{\Phi}}^T
			\jump{\bigstatevec{R}}\cdot\spacevec{n}\hat{s}\dS,
		\end{aligned}
	\end{equation}
	where we used that due to anti-symmetry the different sign in the jump operator between $``+"$ and $``-"$ cancels with the sign flip from opposite normal directions.%\qed
\end{proof}

\begin{lemma}[Total entropy contribution of the surface terms]\label{corollary_entropy_contribution_surface}
	When summing over all elements, the total entropy contributions of the advective and nonconservative terms in \eqref{eq:surface_contribution_element} cancel on the surface 
	\begin{equation}
		\sum_{k=1}^K\int\limits_{\partial E_k,N} \left(\statevec{W}^T \{ \statevec{F}_n^{EC} - \statevec{F}_n\} + \left(\spacevec{F}^{\,S}\cdot\spacevec{n}\right)
		+ \statevec{W}^T
		(\statevecGreek{\Phi}\circ\statevec{R})_n^{\diamond} 
		\right)
		\hat{s} \dS = 0.
	\end{equation}
\end{lemma}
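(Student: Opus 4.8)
The plan is to combine the three surface reductions already established and show that the resulting per-face integrand vanishes identically. Summing the element-boundary integrals \eqref{eq:surface_sum_physical_flux}, \eqref{eq:surface_sum_entropy_flux} and the result of Lemma~\ref{lemma:diamond_flux} rewrites the total surface contribution \eqref{eq:surface_contribution_element} as a single sum over faces whose integrand is
\[
\left(\jump{\statevec{W}}^T\bigstatevec{F}^{EC} - \jump{\statevec{W}^T\bigstatevec{F}} + \jump{\spacevec{F}^{\,S}} + \avg{\statevec{W}\circ\statevecGreek{\Phi}}^T\jump{\bigstatevec{R}}\right)\cdot\spacevec{n}.
\]
Since no term couples distinct faces, it suffices to prove that this bracket vanishes on each face, which I would do by a direct algebraic substitution.

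The key step is to eliminate the entropy-conservative flux using the entropy conservation condition \eqref{eq:entropy_conservation_condition}, which the flux \eqref{eq:volume_flux_ec} satisfies by construction and which, by our choice, is also used on the surface. Substituting $\jump{\statevec{W}}^T\bigstatevec{F}^{EC} = \jump{\spacevec{\Psi}} - \avg{\statevec{W}\circ\statevecGreek{\Phi}}^T\jump{\bigstatevec{R}}$ into the bracket, the two nonconservative $\jump{\bigstatevec{R}}$ contributions cancel, leaving $\jump{\spacevec{\Psi}} - \jump{\statevec{W}^T\bigstatevec{F}} + \jump{\spacevec{F}^{\,S}}$.

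Finally I would insert the discrete entropy flux potential \eqref{eq:entropy_flux_potential}, $\spacevec{\Psi} = \statevec{W}^T\bigstatevec{F} - \spacevec{F}^{\,S}$, so that $\jump{\spacevec{\Psi}} = \jump{\statevec{W}^T\bigstatevec{F}} - \jump{\spacevec{F}^{\,S}}$; the three remaining jump terms then telescope to zero. Hence the face integrand vanishes pointwise and the total surface sum is identically zero, which is the claim.

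Because the argument is essentially a bookkeeping combination of already-proven identities, I do not anticipate a genuine obstacle. The only point requiring care is that the entropy conservation condition \eqref{eq:entropy_conservation_condition} was derived in a one-dimensional, normal-direction finite-volume setting, whereas here $\bigstatevec{F}^{EC}$, $\spacevec{\Psi}$ and $\jump{\bigstatevec{R}}$ are block/vector quantities. I would therefore make explicit that \eqref{eq:entropy_conservation_condition} is a per-direction (componentwise) identity that survives contraction with the physical normal $\spacevec{n}$, so that it may be applied face by face. I would also note that reducing the element-boundary sums to a sum over interior faces tacitly invokes the closed-system assumption \eqref{eq:total_discrete_entropy_conservation_statement}, under which the boundary faces contribute no net entropy flux.
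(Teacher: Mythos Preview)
Your proposal is correct and follows essentially the same approach as the paper: combine \eqref{eq:surface_sum_physical_flux}, \eqref{eq:surface_sum_entropy_flux} and Lemma~\ref{lemma:diamond_flux} into a single face integrand, then use the entropy flux potential \eqref{eq:entropy_flux_potential} and the entropy conservation condition \eqref{eq:entropy_conservation_condition} to see that the integrand vanishes. The only cosmetic difference is the order of the two substitutions (the paper applies \eqref{eq:entropy_flux_potential} first and \eqref{eq:entropy_conservation_condition} second, you do the reverse), and your added remarks about the componentwise nature of \eqref{eq:entropy_conservation_condition} and the closed-system assumption are reasonable clarifications that the paper leaves implicit.
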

\begin{proof}
	First we write the fluxes in terms of jumps according to \eqref{eq:surface_sum_physical_flux}, \eqref{eq:surface_sum_entropy_flux} and apply the result of Lemma \ref{lemma:diamond_flux} for the nonconservative contributions to obtain
	\begin{equation}
		\sum_{\text{faces}} \int\limits_{N}
		\left\{
		\jump{\statevec{W}}^T \bigstatevec{F}^{EC} - \jump{\statevec{W}^T \bigstatevec{F}} 
		+
		\jump{\spacevec{F}^{\,S}}
		+
		\avg{\statevec{W}\circ\statevecGreek{\Phi}}^T\jump{\bigstatevec{R}}\right\}\cdot\spacevec{n}\hat{s} \dS.
	\end{equation}	

	Now we substitute the definition of the entropy flux potential \eqref{eq:entropy_flux_potential} 	
  	\begin{equation}
	  	\sum_{\text{faces}} \int\limits_{N}
	  	\left\{
	  	\jump{\statevec{W}}^T \bigstatevec{F}^{EC} - 	\jump{\spacevec{\Psi}} 
	  	+
		\avg{\statevec{W}\circ\statevecGreek{\Phi}}^T\jump{\bigstatevec{R}}\right\}\cdot\spacevec{n}\hat{s}\dS,
	\end{equation}	
	and substitute the entropy conservation condition \eqref{eq:entropy_conservation_condition} to cancel the remaining terms and therefore show that the total entropy contribution on the surface is zero. %\qed
\end{proof}

Now that the surface and volume contributions are examined, we have everything assembled and combine the previous results to show that approximation \eqref{eq:split_formulation} is EC.

\begin{theorem}[Entropy conservation of the curvilinear split-form DGSEM for the two-layer shallow water equations]\label{theorem_ec}	
The curvilinear split-form DGSEM for the two-layer shallow water equations
\begin{equation}
	\begin{aligned}
		\iprodN{\mathbb{I}^N(J)\statevec{U}_t,\statevecGreek{\varphi}} + 
		&\int\limits_{\partial E,N} \statevecGreek{\varphi}^T \{ \statevec{F}_n^* - \statevec{F}_n\} \hat{s} \dS + \iprodN{\vec{\mathbb{D}}\cdot\bigcontravec{F}^{\#} ,\statevecGreek{\varphi}} 
		\\+
		&\int\limits_{\partial E,N} \statevecGreek{\varphi}^T
		(\statevecGreek{\Phi}\circ\statevec{R})_n^{\diamond} 
		\hat{s} \dS 
		+
		\iprodN{\spacevec{\mathbb{D}}\cdot(\statevecGreek{\Phi}\circ\bigcontravec{R})^{\#},\statevecGreek{\varphi}}
		= 0
	\end{aligned}
	\label{eq:ec_discretization}
\end{equation}
with the EC flux \eqref{eq:volume_flux_ec} and the numerical nonconservative term \eqref{eq:definition_numerical_noncons_term}
\begin{equation}
	\statevec{F}^{\#} = \statevec{F}^* = \bigstatevec{F}^{EC}, \quad
	(\statevecGreek{\Phi}\circ\statevec{R})^{\#} =
	(\statevecGreek{\Phi}\circ\statevec{R})^{\diamond} =
	\frac{1}{2}\statevecGreek{\Phi}^- \circ\jump{\bigstatevec{R}},
	\label{eq:ec_fluxes}
\end{equation}
is entropy conservative.
\end{theorem}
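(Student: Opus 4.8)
The plan is to contract the scheme into entropy space and then stitch together the three preceding lemmas so that the local entropy production collapses onto the element surfaces and ultimately cancels. First I would set the test function equal to the interpolated entropy variables, $\statevecGreek{\varphi} = \statevec{W}$, in \eqref{eq:ec_discretization} with the flux and nonconservative choices \eqref{eq:ec_fluxes}; this is exactly the contracted form \eqref{eq:split_formulation_ec_entropy_space}. For the time derivative term I would invoke \eqref{eq:discrete_entropy_analysis_time_derivative}, which---under the assumption that the chain rule holds in time---rewrites $\iprodN{\mathbb{I}^N(J)\statevec{U}_t,\statevec{W}}$ as the local entropy rate $\iprodN{\mathbb{I}^N(J)S_t,1}$.

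Next I would eliminate the two volume inner products using Lemma \ref{lemma_volume_contribution}: the combination $\iprodN{\vec{\mathbb{D}}\cdot\bigcontravec{F}^{EC},\statevec{W}} + \iprodN{\spacevec{\mathbb{D}}\cdot(\statevecGreek{\Phi}\circ\bigcontravec{R})^{\diamond},\statevec{W}}$ collapses into the single surface integral $\int_{\partial E,N}(\spacevec{F}^{\,S}\cdot\spacevec{n})\hat{s}\dS$ of the entropy flux. After this substitution the per-element identity reads $\iprodN{\mathbb{I}^N(J)S_t,1} = -\Gamma_k$, where $\Gamma_k$ is precisely the assembled boundary expression \eqref{eq:surface_contribution_element} consisting of the advective surface term $\statevec{W}^T\{\statevec{F}_n^{EC}-\statevec{F}_n\}$, the entropy flux $\spacevec{F}^{\,S}\cdot\spacevec{n}$, and the numerical nonconservative surface term $\statevec{W}^T(\statevecGreek{\Phi}\circ\bigstatevec{R})_n^{\diamond}$.

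To obtain the global balance I would sum over all $K$ elements according to \eqref{eq:total_discrete_entropy}, giving $\operatorname{d}\bar{S}/\operatorname{dt} = -\sum_{k=1}^K \Gamma_k$. This is exactly the quantity shown to vanish in Lemma \ref{corollary_entropy_contribution_surface}: reorganizing the element sums into a sum over shared faces, the single-valued numerical flux together with the jumps produced at each interface reproduces the entropy conservation condition \eqref{eq:entropy_conservation_condition}, so that the advective, entropy-flux, and nonconservative contributions cancel interface by interface. Hence $\operatorname{d}\bar{S}/\operatorname{dt} = 0$, which is the claimed conservation statement \eqref{eq:total_discrete_entropy_conservation_statement}.

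All the substantive computation has already been carried out in the lemmas---in particular Lemma \ref{lemma_volume_contribution}, which leans on the SBP property \eqref{eq:SBP_integration_by_parts}, the discrete metric identities \eqref{eq:metric_identity}, and the entropy conservation condition \eqref{eq:entropy_conservation_condition}. The only delicate point in the theorem itself is the transition from per-element boundary integrals to a face-based sum over interior interfaces: one must track that the primary/secondary labeling and the opposite outward normals \eqref{eq:defintion_normal_vector_pm} turn the element-wise surface contributions into the single-valued jump expressions that Lemma \ref{corollary_entropy_contribution_surface} consumes, while the physical boundary faces are handled by the closed-system assumption that their entropy flux vanishes. I expect this interface bookkeeping, rather than any fresh calculation, to be the main obstacle.
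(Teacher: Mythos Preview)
Your proposal is correct and follows exactly the same route as the paper: contract with $\statevec{W}$, use Lemma~\ref{lemma_volume_contribution} to push the volume terms onto the surface as entropy fluxes, then invoke Lemma~\ref{corollary_entropy_contribution_surface} so that the summed surface contributions cancel face by face. The paper's own proof is just the two-sentence version of what you wrote; your additional bookkeeping about the time term \eqref{eq:discrete_entropy_analysis_time_derivative}, the element sum \eqref{eq:total_discrete_entropy}, and the closed-system boundary assumption is accurate and merely spells out what the paper leaves implicit.
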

\begin{proof}
	From Lemma \ref{lemma_volume_contribution}, we have that the entropy contributions cancel in the volume and generate the entropy flux on the surface. Then we apply the result from \mbox{Lemma \ref{corollary_entropy_contribution_surface}} to show that the entropy contribution cancels on the surface and therefore entropy is conserved.%\qed
\end{proof}

\subsection{Well-Balancedness}\label{sec:well_balanced}
In the following we show that in addition to entropy conservation, the discretization \eqref{eq:ec_discretization} is well-balanced as it satisfies the lake-at-rest condition \eqref{eq:lake_at_rest_condition}.

\begin{corollary}[Well-balancedness of the curvilinear split-form DGSEM for the two-layer shallow water equations]\label{corollary_well_balanced}
	The EC curvilinear split-form DGSEM for the two-layer shallow water equations
	\begin{equation}
		\begin{aligned}
			\iprodN{\mathbb{I}^N(J)\statevec{U}_t,\statevecGreek{\varphi}} + 
			&\int\limits_{\partial E,N} \statevecGreek{\varphi}^T \{ \statevec{F}_n^{EC} - \statevec{F}_n\} \hat{s} \dS + \iprodN{\vec{\mathbb{D}}\cdot\bigcontravec{F}^{EC} ,\statevecGreek{\varphi}} 
			\\+
			&\int\limits_{\partial E,N} \statevecGreek{\varphi}^T
			(\statevecGreek{\Phi}\circ\statevec{R})_n^{\diamond} 
			\hat{s} \dS 
			+
			\iprodN{\spacevec{\mathbb{D}}\cdot(\statevecGreek{\Phi}\circ\bigcontravec{R})^{\diamond},\statevecGreek{\varphi}}
			= 0
		\end{aligned}
	\end{equation}
	 preserves the lake-at-rest initial condition \eqref{eq:lake_at_rest_condition}
	 \begin{equation*}
	 	u_1,u_2,v_1,v_2 = 0, \quad b+h_2 = const, \quad h_1 = const.
	 \end{equation*}
\end{corollary}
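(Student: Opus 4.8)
The plan is to substitute the lake-at-rest state \eqref{eq:lake_at_rest_condition} into the entropy-conservative discretization and show that the entire spatial operator vanishes, so that $\statevec{U}_t = 0$ and the state is stationary. Because the test function $\statevecGreek{\varphi}$ is arbitrary, the diagonal mass matrix $\mathcal{M}$ is positive, and the Jacobian is positive, it suffices to show that the discrete residual vanishes node by node. I would moreover argue that the volume contribution (flux-differencing plus nonconservative volume term) and the surface contribution (the correction $\statevec{F}_n^{EC}-\statevec{F}_n$ plus the surface nonconservative term) each vanish independently, rather than merely cancelling one another. Throughout I would use the elementary identity $g\avg{h_i}^2 - \tfrac12 g\avg{h_i^2} = \tfrac{g}{2}h_i^- h_i^+$ to rewrite the pressure entries of the EC flux \eqref{eq:volume_flux_ec} at vanishing velocities.

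With $u_1=u_2=v_1=v_2=0$, every velocity-weighted entry of \eqref{eq:volume_flux_ec} is zero, so the two continuity rows carry no flux and, since $\statevecGreek{\Phi}$ vanishes there, no nonconservative contribution; these rows are balanced immediately. For the momentum rows the only surviving flux entry is the pressure $\tfrac{g}{2}h_i^- h_i^+$. In the upper layer this equals the constant $\tfrac{g}{2}h_1^2$ since $h_1$ is constant, and the matching nonconservative volume term, built from \eqref{eq:definition_numerical_noncons_term}, is proportional to $\jump{b+h_2}=0$; a constant pressure is annihilated by the volume operator \eqref{eq:split_form_flux} exactly when the discrete metric identities \eqref{eq:metric_identity} hold, so the upper-layer volume residual is zero.

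For the lower layer the pressure $\tfrac{g}{2}h_2^- h_2^+$ varies with the bathymetry and must balance the nonconservative volume term \eqref{eq:split_form_noncons}, whose relevant component reduces to $\tfrac{g}{2}h_2\,\jump{b+\tfrac{\rho_1}{\rho_2}h_1}=\tfrac{g}{2}h_2\,\jump{b}$ because $h_1$ is constant. Collapsing each flux-differencing sum with the consistency of the two-point operators and the property $\sum_m \mathcal{D}_{im}=0$, the Cartesian reduction yields $g\,h_2\,(\mathcal{D}h_2)$ from the flux term and $g\,h_2\,(\mathcal{D}b)$ from the nonconservative term at each node, whose sum is $g\,h_2\,\mathcal{D}(h_2+b)=0$ since $h_2+b$ is constant and $\mathcal{D}$ annihilates constants. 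On the surface the same structure recurs: for the lower layer the correction gives $\tfrac{g}{2}h_2^-\jump{h_2}$ while the surface nonconservative term gives $\tfrac{g}{2}h_2^-\jump{b}$, summing to $\tfrac{g}{2}h_2^-\jump{h_2+b}=0$ because $h_2+b$ stays single-valued across every interface even where $b$ jumps; the upper-layer surface terms vanish identically since the constant pressure makes $\statevec{F}_n^{EC}=\statevec{F}_n$ and $\jump{b+h_2}=0$. Collecting all rows gives $\statevec{U}_t=0$.

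The main obstacle I anticipate is not the scalar identities but carrying them faithfully through the full curvilinear tensor-product setting that underlies \eqref{eq:split_form_flux} and \eqref{eq:split_form_noncons}. I must check that the metric weights $\avg{J\vec{a}^{\,1}}$ and $\avg{J\vec{a}^{\,2}}$ multiply the pressure-flux and bathymetry contributions identically, so that the telescoping $\mathcal{D}(h_2+b)=0$ survives separately in the $\xi$- and $\eta$-directions, and that the constant upper-layer pressure is eliminated by precisely the discrete metric identities \eqref{eq:metric_identity} rather than some nearby but inexact cancellation. Verifying that each pressure jump is paired with the correct nonconservative jump, in both the volume and the surface and in both reference directions, is the delicate bookkeeping where the special forms \eqref{eq:volume_flux_ec} and \eqref{eq:definition_numerical_noncons_term} must align.
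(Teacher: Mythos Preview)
Your proposal is correct and follows essentially the same approach as the paper: both show that the volume and surface contributions vanish separately under the lake-at-rest condition, use the identity $\avg{h}^2-\tfrac12\avg{h^2}=\tfrac12 h^-h^+$ to rewrite the pressure entries, combine the resulting flux and nonconservative contributions into a single term involving a constant quantity, and finish with the discrete metric identities. The paper demonstrates the $h_1u_1$ equation in full curvilinear detail (combining into $g h_1\sum_m \mathcal{D}_{im}(b+h_1+h_2)\avg{J\vec a^{\,1}}$ and then invoking \eqref{eq:metric_identity}) and declares the other momentum rows analogous, whereas you sketch both layers but leave the curvilinear bookkeeping as the acknowledged obstacle; that bookkeeping is precisely what the paper carries out, so your plan matches.
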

\begin{proof}
	To show that the scheme is well-balanced, we take the same approach as demonstrating entropy conservation. That is we examine the volume and surface parts separately, to see that both contributions vanish for the lake-at-rest condition \eqref{eq:lake_at_rest_condition}.\\
	In the continuity equations in \eqref{eq:two_layer_system_fluxes}, it is clear that well-balancedness is directly satisfied as all products vanish due to the initial condition $u_1,u_2,v_1,v_2=0$. Therefore, only the momentum equations need to be considered to show well-balancedness. For simplicity the proof is only demonstrated for the $h_1u_1$-equation as the other components are analogous.
	
	Inserting the lake-at-rest conditions \eqref{eq:lake_at_rest_condition} into the EC approximation \eqref{eq:ec_discretization} and using that $\smash{\avg{h}^2 - \frac{1}{2}\avg{h^2} = \frac{h^+h^-}{2}}$, the volume flux contribution simplifies to
\begin{equation}
		\begin{aligned}
			\iprodN{\vec{\mathbb{D}}\cdot\bigcontravec{F}^{EC} ,\statevecGreek{\varphi_{h_1u_1}}} = 
			\sum\limits_{i,j=0}^N \omega_{ij}
			&\left[2\sum_{m=0}^{N} \mathcal{D}_{im} \left(\left( g\avg{h_1}_{(i,m)j}^2 - \frac{1}{2}g\avg{h_1^2}_{(i,m)j}\right) \avg{J\vec{a}_1^1}_{(i,m)j} \right)\right.
			\\&\hspace{-0.18cm}+
			\left.
			2\sum_{m=0}^{N} \mathcal{D}_{jm}   \left(\left( g\avg{h_1}_{i(j,m)}^2 - \frac{1}{2}g\avg{h_1^2}_{i(j,m)}\right) \avg{J\vec{a}_1^2}_{i(j,m)} \right)\right]
			\\=
			g\sum\limits_{i,j=0}^N \omega_{ij} \left(h_1\right)_{ij}&\left[\sum_{m=0}^{N} \mathcal{D}_{im} \left(\left(h_1\right)_{mj} \avg{J\vec{a}_1^1}_{(i,m)j} \right)\right.
			\\&\hspace{-0.18cm}+
			\left.
			\sum_{m=0}^{N} \mathcal{D}_{jm}   \left(\left(h_1\right)_{im} \avg{J\vec{a}_1^2}_{i(j,m)} \right)\right].
		\end{aligned}
		\label{eq:wb_volume_contribution_flux}
	\end{equation}	
The nonconservative volume contribution is then rewritten into a similar form. Therefore, we first use that the metric identities \eqref{eq:metric_identity} are satisfied such that any terms containing only local contributions $\smash{\statevecGreek{\Phi}_{ij}\circ\bigstatevec{R}_{ij}
}$ vanish to obtain
\begin{equation}
	\begin{aligned}
		\statevecGreek{\Phi}\circ \left(\spacevec{\nabla}_{\xi}\cdot\mathbb{I}^N(\bigcontravec{R})\right)
		\approx
		\spacevec{\mathbb{D}}\cdot(\statevecGreek{\Phi}\circ\bigcontravec{R})^{\diamond} = 
		&\sum_{m=0}^{N} \mathcal{D}_{im} \left(\statevecGreek{\Phi}_{ij}\circ\left(\bigstatevec{R}_{mj} \cdot \avg{J\vec{a}^{\,1}}_{(i,m)j} \right)\right)
		\\ +
		&\sum_{m=0}^{N} \mathcal{D}_{jm}   \left(\statevecGreek{\Phi}_{ij}\circ\left(\bigstatevec{R}_{im} \cdot \avg{J\vec{a}^{\,2}}_{i(j,m)} \right)\right),
	\end{aligned}
\end{equation}
By expanding the nonconservative term, the contribution to the $h_1u_1$-equation is then given by
\begin{equation}
	\begin{aligned}
		\iprodN{\spacevec{\mathbb{D}}\cdot(\statevecGreek{\Phi}\circ\bigcontravec{R})^{\diamond},\statevecGreek{\varphi_{h_1u_1}}}
		=
		g\sum\limits_{i,j=0}^N \omega_{ij} \left(h_1\right)_{ij}
		&\left[\sum_{m=0}^{N} \mathcal{D}_{im} \left(\left(b+h_2\right)_{mj} \avg{J\vec{a}_1^1}_{(i,m)j} \right) \right.
		\\ &\hspace{-0.18cm}+\left.
		\sum_{m=0}^{N} \mathcal{D}_{jm}   \left(\left(b+h_2\right)_{im} \avg{J\vec{a}_1^2}_{i(j,m)} \right)\right].
	\end{aligned}
	%$}
	\label{eq:wb_volume_contribution_nonconservative}
\end{equation}
Next we assemble the total volume contribution from \eqref{eq:wb_volume_contribution_flux} and \eqref{eq:wb_volume_contribution_nonconservative}. We first use that $b+h_1+h_2=\text{constant}$, according to \eqref{eq:lake_at_rest_condition}. Then, from the consistency of the derivative matrix and assuming that the metric identities \eqref{eq:metric_identity} hold discretely, the volume parts vanish for the lake-at-rest condition
\begin{equation}
	\begin{aligned}
		\iprodN{\vec{\mathbb{D}}\cdot\bigcontravec{F}^{EC} ,\statevecGreek{\varphi_{h_1u_1}}} + &\iprodN{\spacevec{\mathbb{D}}\cdot(\statevecGreek{\Phi}\circ\bigcontravec{R})^{\diamond},\statevecGreek{\varphi_{h_1u_1}}}
		\\&\hspace{-2cm}=
		\sum\limits_{i,j=0}^N \omega_{ij} g\left(h_1\right)_{ij}
		&&\hspace{-3.1cm}\left[\sum_{m=0}^{N} \mathcal{D}_{im} \left(\left(b+h_1+h_2\right)_{mj} \avg{J\vec{a}_1^1}_{(i,m)j} \right) \right.
		\\& &&\hspace{-3.1cm}\hspace{-0.18cm}+\left.
		\sum_{m=0}^{N} \mathcal{D}_{jm}   \left(\left(b+h_1+h_2\right)_{im} \avg{J\vec{a}_1^2}_{i(j,m)} \right)\right]
		\\&\hspace{-2cm}=
		\frac{g}{2}\left(b+h_1+h_2\right)\sum\limits_{i,j=0}^N \omega_{ij} \left(h_1\right)_{ij}
		&&\hspace{-1.1cm}\left[\sum_{m=0}^{N} \mathcal{D}_{im} \left(\left(J\vec{a}_1^1\right)_{ij} +\left(J\vec{a}_1^1\right)_{mj} \right)   \right.
		\\& &&\hspace{-1.1cm}\hspace{-0.18cm}+ \left.
		\sum_{m=0}^{N} \mathcal{D}_{jm} \left(\left(J\vec{a}_1^1\right)_{ij} +\left(J\vec{a}_1^1\right)_{im} \right)\right]
		\\&\hspace{-2cm}=
		\frac{g}{2}\left(b+h_1+h_2\right)\sum\limits_{i,j=0}^N \omega_{ij} \left(h_1\right)_{ij}
		&&\hspace{-1.1cm}\left[\sum_{m=0}^{N} \mathcal{D}_{im} \left(J\vec{a}_1^1\right)_{mj} + 
		\sum_{m=0}^{N} \mathcal{D}_{jm} \left(J\vec{a}_1^1\right)_{im} \right]
		\\&\hspace{-2cm}=
		\,0.
	\end{aligned}
\end{equation}

The remaining surface contributions are given by
\begin{equation}
	\int\limits_{\partial E,N} \statevecGreek{\varphi}_{h_1u_1}^T \{ \statevec{F}_n^* - \statevec{F}_n\} \hat{s} \dS +
	\int\limits_{\partial E,N} \statevecGreek{\varphi}_{h_1u_1}^T
	(\statevecGreek{\Phi}\circ\statevec{R})_n^{\diamond} 
	\hat{s} \dS.
	\label{eq:wb_surface_contribution}
\end{equation}

Expanding the terms in the $h_1u_1$-equation, again all products with velocity components vanish, and \eqref{eq:wb_surface_contribution} simplifies to 
\begin{equation}
	\begin{aligned}
		&\int\limits_{\partial E,N} \left(g\avg{h_1}^2 -\frac{1}{2}g\avg{h_1^2} - \frac{1}{2}g\left(h_1^-\right)^2 
		+ 
		\frac{1}{2}gh_1^-\jump{b+h_2}\right)n_1\hat{s}
		\dS
		\\=
		&\frac{g}{2}\int\limits_{\partial E,N} \left(2\avg{h_1}^2 - \avg{h_1^2} - \left(h_1^-\right)^2 
		+ 
		h_1^-\jump{b+h_2}\right)n_1\hat{s}
		\dS.
		\label{eq:wb_surface_step_1}
	\end{aligned}
\end{equation}

The first three terms are rewritten as
\begin{equation}
	2\avg{h_1}^2 - \avg{h_1^2} - \left(h_1^-\right)^2 = h_1^-h_1^+ - (h_1^-)^2 = h_1^-\jump{h_1}.
\end{equation}	
Inserting this into \eqref{eq:wb_surface_step_1} and using that $\jump{b+h_1+h_2}=0$ according to the lake-at-rest condition yields

\begin{equation}
	\begin{aligned}
		&\frac{g}{2}\int\limits_{\partial E,N}
		\left(
		{h_1^-}\jump{h_1} 
		+ {h_1^-}\jump{b+h_2}\right)n_1\hat{s}\dS
		\\&=
		\frac{g}{2}\int\limits_{\partial E,N} \left({h_1^-}\jump{b+h_1+h_2}
		\right)n_1\hat{s}\dS 
		\\
		&= 0,
	\end{aligned}
\end{equation}
which shows that the $h_1u_1$-equation satisfies \eqref{eq:lake_at_rest_condition} as both the volume and the surface contributions vanish. As the remaining momentum equations are analogous, this shows that the EC split-form DGSEM approximation \eqref{eq:ec_discretization} is well-balanced for the two-layer SWE. %\qed
\end{proof}

\section{Entropy-Stable DGSEM}\label{sec:entropy_stable_dgsem}
The high-order approximation introduced in Theorem \ref{theorem_ec} satisfies the entropy conservation statement \eqref{eq:entropy_conservation_law} in a semi-discrete setting. In the presence of discontinuities entropy conservation is not sufficient and instead the entropy inequality \eqref{eq:entropy_inequality} must be satisfied to account for entropy dissipation at shocks. Following the idea of Tadmor \cite{tadmor1987numerical}, we add numerical dissipation to the EC scheme from Theorem \ref{theorem_ec} to construct a discretization that is entropy stable (ES). We use the approach from \cite{fjordholm2012energy} and create an ES flux by adding a local Lax-Friedrichs type dissipation to the EC flux

\begin{equation}
	\bigstatevec{F}^{ES} = \bigstatevec{F}^{EC} - \frac{1}{2}|\lambda_{max}|\bar{\statevec{H}}\jump{\statevec{w}}, 
	\label{eq:flux_es}
\end{equation}
where $\lambda_{max}$ denotes an approximation of the maximum eigenvalue from \eqref{eq:bound_eigenvalue} and $\bar{\statevec{H}} = \statevec{w}_{\statevec{u}}$ is the symmetric positive definite entropy Jacobian matrix, evaluated at the arithmetic average value of the solution state at element interfaces. That the dissipation is dependent on the jump in entropy variables guarantees that entropy is dissipated at shocks and conserved for smooth solutions. 

\begin{theorem}[Entropy stability of the curvilinear split-form DGSEM for the two-layer shallow water equations]
The curvilinear split-form DGSEM for the two-layer shallow water equations
	\begin{equation}
		\begin{aligned}
			\iprodN{\mathbb{I}^N(J)\statevec{U}_t,\statevecGreek{\varphi}} + 
			&\int\limits_{\partial E,N} \statevecGreek{\varphi}^T \{ \statevec{F}_n^{ES} - \statevec{F}_n\} \hat{s} \dS + \iprodN{\vec{\mathbb{D}}\cdot\bigcontravec{F}^{EC} ,\statevecGreek{\varphi}} 
			\\+
			&\int\limits_{\partial E,N} \statevecGreek{\varphi}^T
			(\statevecGreek{\Phi}\circ\statevec{R})_n^{\diamond}
			\hat{s} \dS 
			+
			\iprodN{\spacevec{\mathbb{D}}\cdot(\statevecGreek{\Phi}\circ\bigcontravec{R})^{\diamond},\statevecGreek{\varphi}}
			= 0
		\end{aligned}
		\label{eq:es_discretization}
	\end{equation}
	is ES and well-balanced, when the ES flux \eqref{eq:flux_es} is used at element interfaces, together with the EC flux \eqref{eq:ec_fluxes} on the volume and the numerical nonconservative term \eqref{eq:definition_numerical_noncons_term}.
\end{theorem}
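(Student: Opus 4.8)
The plan is to leverage the entropy conservation result of Theorem \ref{theorem_ec} and the well-balancedness of Corollary \ref{corollary_well_balanced}, observing that the only difference between the entropy stable scheme \eqref{eq:es_discretization} and the entropy conservative scheme \eqref{eq:ec_discretization} is that the surface flux $\statevec{F}_n^{EC}$ is replaced by $\statevec{F}_n^{ES} = \statevec{F}_n^{EC} - \frac{1}{2}|\lambda_{max}|\bar{\statevec{H}}\jump{\statevec{W}}$ at the element interfaces, as prescribed by \eqref{eq:flux_es}. The volume fluxes and the numerical nonconservative terms on both surface and volume remain untouched. Since the contracted analysis is linear in the surface flux once the test function is fixed to the entropy variables, both properties follow by isolating the contribution of the added dissipation term.

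First I would contract \eqref{eq:es_discretization} with the interpolated entropy variables $\statevec{W}$, exactly as in \eqref{eq:split_formulation_entropy_space}, and sum over all $K$ elements. By Lemma \ref{lemma_volume_contribution} the volume contributions still become the entropy flux at the boundary, and by Lemma \ref{corollary_entropy_contribution_surface} the $\statevec{F}^{EC}$ part of the surface flux together with the entropy flux and the nonconservative surface term still cancel. Splitting off the dissipation, the only surviving term in the total entropy balance \eqref{eq:total_discrete_entropy} is the interface dissipation, which, after combining the contributions of neighboring elements across each face with their opposing normals \eqref{eq:defintion_normal_vector_pm}, collects into
\begin{equation}
	\frac{\operatorname{d\bar{S}}}{\operatorname{dt}} = -\frac{1}{2}\sum_{\text{faces}}\int\limits_N |\lambda_{max}|\,\jump{\statevec{W}}^T\bar{\statevec{H}}\jump{\statevec{W}}\,\hat{s}\dS.
\end{equation}
Since $\bar{\statevec{H}}$ is symmetric positive definite owing to the strong convexity of the entropy \eqref{eq:entropy_function}, and $|\lambda_{max}|\geq 0$, each face integrand is nonnegative, so $\frac{\operatorname{d\bar{S}}}{\operatorname{dt}}\leq 0$, which is the discrete analogue of the entropy inequality \eqref{eq:entropy_inequality} and establishes entropy stability.

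For well-balancedness I would evaluate the entropy variables \eqref{eq:entropy_variables} at the lake-at-rest state \eqref{eq:lake_at_rest_condition}. With $u_1=u_2=v_1=v_2=0$ the momentum components of $\statevec{W}$ vanish, while the two height components reduce to $\rho_1 g(h_1+h_2+b)$ and $\rho_1 g h_1 + \rho_2 g(h_2+b)$. Both are spatially constant because $h_1=\text{const}$ and $h_2+b=\text{const}$, so the interface jump $\jump{\statevec{W}}$ is identically zero on this state. Consequently the dissipation term $\frac{1}{2}|\lambda_{max}|\bar{\statevec{H}}\jump{\statevec{W}}$ vanishes and the surface flux collapses to $\statevec{F}_n^{ES}=\statevec{F}_n^{EC}$, so scheme \eqref{eq:es_discretization} coincides exactly with the entropy conservative scheme of Corollary \ref{corollary_well_balanced} at lake-at-rest; well-balancedness then follows immediately from that corollary.

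The entropy conservative cancellation is entirely inherited from the earlier lemmas and is routine. The step requiring the most care is verifying that the dissipation contribution has a definite sign: one must confirm that combining the two neighboring-element contributions with their opposite normals \eqref{eq:defintion_normal_vector_pm} produces the symmetric quadratic form $\jump{\statevec{W}}^T\bar{\statevec{H}}\jump{\statevec{W}}$ rather than an indefinite expression, and that $\bar{\statevec{H}}$, evaluated at the arithmetic mean interface state, indeed remains positive definite there so that the sign of the total entropy rate is guaranteed.
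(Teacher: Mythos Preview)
Your proposal is correct and follows essentially the same approach as the paper: contract with the entropy variables, invoke Lemma \ref{lemma_volume_contribution} and Lemma \ref{corollary_entropy_contribution_surface} so that only the interface dissipation survives, and then use positive definiteness of $\bar{\statevec{H}}$ for the sign; for well-balancedness, observe that the entropy variables are constant at lake-at-rest so $\jump{\statevec{W}}=0$ and the ES flux reduces to the EC flux, whence Corollary \ref{corollary_well_balanced} applies. Your write-up is in fact slightly more detailed than the paper's, in that you explicitly verify the constancy of the entropy variables and flag the handling of opposing normals.
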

\begin{proof}
	Entropy stability follows, provided \eqref{eq:es_discretization} is more dissipative than the EC discretization \eqref{eq:ec_discretization}. From Lemma \ref{corollary_entropy_contribution_surface} it is clear that the surface contributions of \eqref{eq:es_discretization} vanish together with the EC flux part of \eqref{eq:flux_es} when contracted into entropy space. Therefore, the change in total entropy obtained from summing over all elements depends only on the remaining dissipation term
	\begin{equation}
		\frac{\operatorname{d\bar{S}}}{{\operatorname{dt}}} =- \sum\limits_{faces}\int\limits_{N}
		\left(\frac{1}{2}\jump{\statevec{W}}^T|\lambda_{max}|\bar{\statevec{H}}\jump{\statevec{W}}\right) \cdot \spacevec{n}\hat{s}\dS \leq 0.
	\end{equation}
	Using that $\bar{\statevec{H}}$ is the Hessian of a convex function and therefore positive definite, the total entropy decreases in time and the discretization is therefore ES.
	For the lake-at-rest initial condition \eqref{eq:lake_at_rest_condition}, the entropy variables \eqref{eq:entropy_variables} remain constant and the ES flux simplifies to the EC flux. The proof for well-balancedness then follows directly from Corollary \ref{corollary_well_balanced}. %\qed
\end{proof}

\section{Results}\label{sec:results}
To demonstrate the theoretical findings and performance of the split-form DGSEM, we present and discuss results from several numerical experiments. First, we investigate convergence properties and show spectral convergence on a curvilinear mesh. Then we provide numerical evidence of the well-balanced property for discontinuous bottom topography and add a perturbation to this test case to demonstrate entropy stability. We then close the discussion with a more complex test, showcasing a partial dam break of a parabolic dam.

Numerical results are obtained using the open-source framework Trixi.jl \cite{ranocha2022adaptive,schlottkelakemper2021purely}. A reproducibility repository is available on Zenodo \cite{ersing_2023repo} and GitHub\footnote{\url{https://github.com/trixi-framework/paper-2023-es_two_layer}} for the results presented herein.
All computations use a low storage five-stage fourth-order Runge-Kutta scheme of Carpenter and Kennedy \cite{CarpenterKennedy94} for time integration, with either fixed or CFL-based time stepping \cite{courant1967partial}.

\subsection{Convergence Results}
We apply the method of manufactured solutions  to demonstrate spectral convergence of the numerical approximation against an exact reference solution. To construct the exact solution, layer heights and bathymetry are defined by trigonometric functions and constants are used for the velocities

\begin{equation}
	\begin{aligned}
		u_1 &= 0.9, \quad u_2 = 1, \quad v_1 = 1, \quad v_2 = 0.9\\
		b\hspace{3.5pt}   &= 1 + 0.1 \cos(\pi x) \sin(\pi y)\\
		h_1 &= 2 + 0.1 \sin(2\pi x + t) \cos(2\pi y + t)\\
		h_2 &= 4 + 0.1 \cos(2\pi x + t) \sin(2\pi y + t) - h_1 - b.
	\end{aligned}
	\label{eq:IC_convergence}
\end{equation}

The additional source terms for the manufactured solution are then computed using analytical derivatives 
\begin{equation}
	src = 
	\begin{pmatrix} h_1 \\ h_1u_1 \\ h_1v_1 \\ h_2 \\ h_2u_2 \\ h_2v_2 \end{pmatrix}_t
	+
	\begin{pmatrix}
		h_1u_1 \\ h_1u_1^2 + 0.5gh_1^2 \\ h_1u_1v_1 \\ h_2u_2 \\ h_2u_2^2 + 0.5gh_2^2 \\ h_2u_2v_2
	\end{pmatrix}_x
	+
	\begin{pmatrix}
		h_1v_1 \\ h_1u_1v_1 \\ h_1v_1^2 + 0.5gh_1^2 \\ h_2v_2 \\ h_2u_2v_2 \\ h_2v_2^2 + 0.5gh_1^2
	\end{pmatrix}_y
	+
	\begin{pmatrix}
		0 \\ h_1g\left(b+h_2\right)_x \\ h_1g\left(b+h_2\right)_y \\ 0 \\ h_2g\left(b+\frac{\rho_1}{\rho_2}h_1\right)_x \\ h_2g\left(b+\frac{\rho_1}{\rho_2}h_1\right)_y
	\end{pmatrix},
\end{equation}
\begin{figure}[htb]
	\centering
	\includegraphics[width=0.65\linewidth]{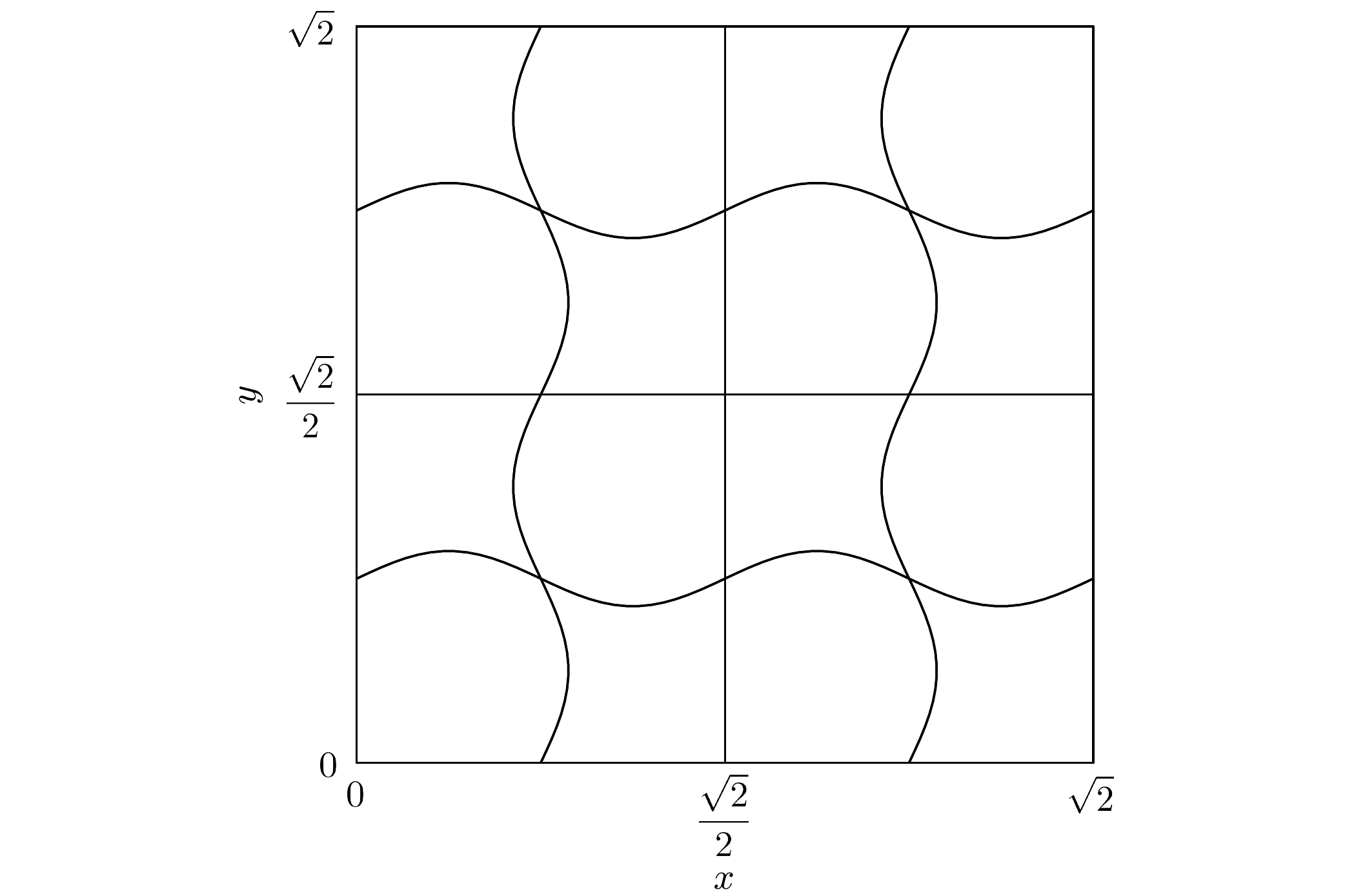}
	\caption{Curvilinear mesh adapted from \cite{wintermeyer2017entropy} with polynomial degree $N=6$ representation of interfaces used to test spectral convergence and well-balancedness}\label{fig:mesh_convergence}
\end{figure}
with the gravitational constant set to $g=10$ and the density ratio $\frac{\rho_1}{\rho_2}=0.9$.

The resulting problem is solved in the domain $\smash{[0,\sqrt{2}]^2}$ on the curvilinear mesh shown in Figure \ref{fig:mesh_convergence} using polynomial degree $N=6$ representation at element edges. Solutions are computed at the final time $t_{end}=1$ for polynomial degrees up to $N=30$ and a fixed timestep $\Delta t = 1/12000$.

Spectral convergence for both the ES and EC approximations is shown in Figure \ref{fig:convergence_plot}, which shows a semi-log plot for the error in $L_2$ over the polynomial degree $N$.
We see suboptimal convergence for the EC scheme for various polynomial degrees $N$. The characteristic pattern of suboptimal convergence at odd polynomial degree, reported in \cite{gassner2013skew,gassner2016split} cannot be observed for curved elements, but does occur under the initial conditions \eqref{eq:IC_convergence} and resolution parameters on a mesh with straight element edges. The convergence behavior of the remaining quantities was similar and is not shown.
\begin{figure}[htb]
	\centering
	%\resizebox{0.75\textwidth}{!}{\input{pictures/convergence_plot.pgf}}
	\includegraphics[width=0.7\textwidth]{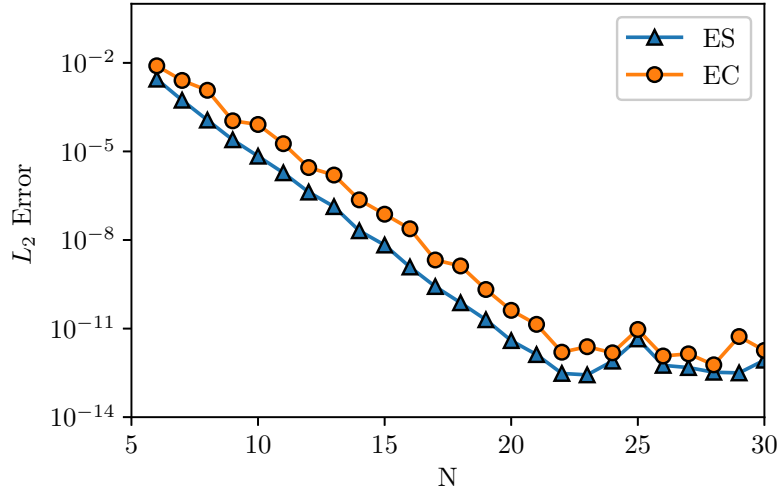}
	\caption{Spectral convergence of ES and EC approximations in space shown in a semi-log plot for the $L_2$-Error in $h_1u_1$ over the polynomial degree $N$. Results are obtained with a fixed timestep of $\Delta t = 1/12000$ at the final time $t_{end}=1$} 
	\label{fig:convergence_plot}
\end{figure}
  
\subsection{Well-balanced}\label{sec:results_well_balanced}
The next test is adapted from \cite{wintermeyer2017entropy} and demonstrates that the approximation preserves the lake-at-rest condition \eqref{eq:lake_at_rest_condition} for the two-layer SWE with discontinuous bottom topography, according to Corollary \ref{corollary_well_balanced}. Again we use the curvilinear mesh from Figure \ref{fig:mesh_convergence} and set the following lake-at-rest conditions with a discontinuous bottom topography at element $3\times2$ (see Figure \ref{fig:well_balanced})
\begin{equation}
	\begin{aligned}
		&H_1 = 0.6, \quad H_2  = 0.5, \quad u_1,u_2,v_1,v_2 = 0, \quad \frac{\rho_1}{\rho_2} = 0.9\\
		&b = \begin{cases}
			0.25 + 0.1\sin\left(2\pi x\right) + 0.1\cos\left(2\pi y\right), & \text{for element $3\,\times\,2$}\\
			0,  & \text{else}.
		\end{cases} 
	\end{aligned}
	\label{eq:well_balanced_IC}
\end{equation}

We set periodic boundary conditions and compute the solution with a polynomial degree $N=8$ and a CFL-based timestep up to the final time $t_{end}=100$. In Figure \ref{fig:well_balanced} we show the bottom topography and present the error in the absolute water height $H_1=h_1+h_2+b$ relative to the lake-at-rest initial condition \eqref{eq:well_balanced_IC}. Even after considerate time of $t=100$, errors in $H_1$ and $H_2$ remain around machine precision and the steady-state is preserved. Similar results with errors near unit roundoff were also obtained with the ES flux.
\begin{figure}
	\subfigure[bottom topography]{
		\includegraphics[width=0.49\linewidth,trim={5cm 2cm 11cm 3cm},clip]{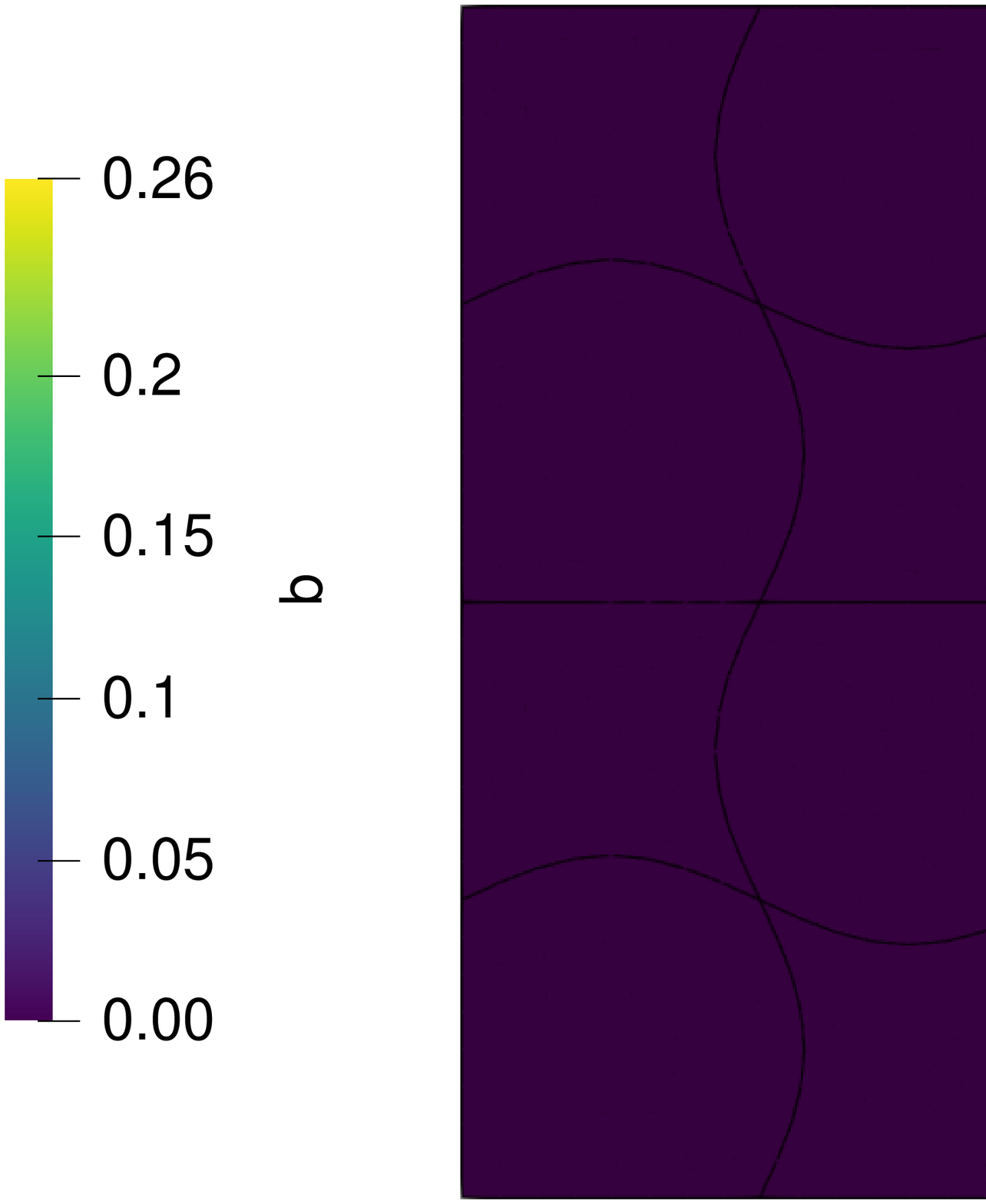}
	}
	\subfigure[lake-at-rest error]{
		\includegraphics[width=0.49\linewidth,trim={5cm 2cm 11cm 3cm},clip]{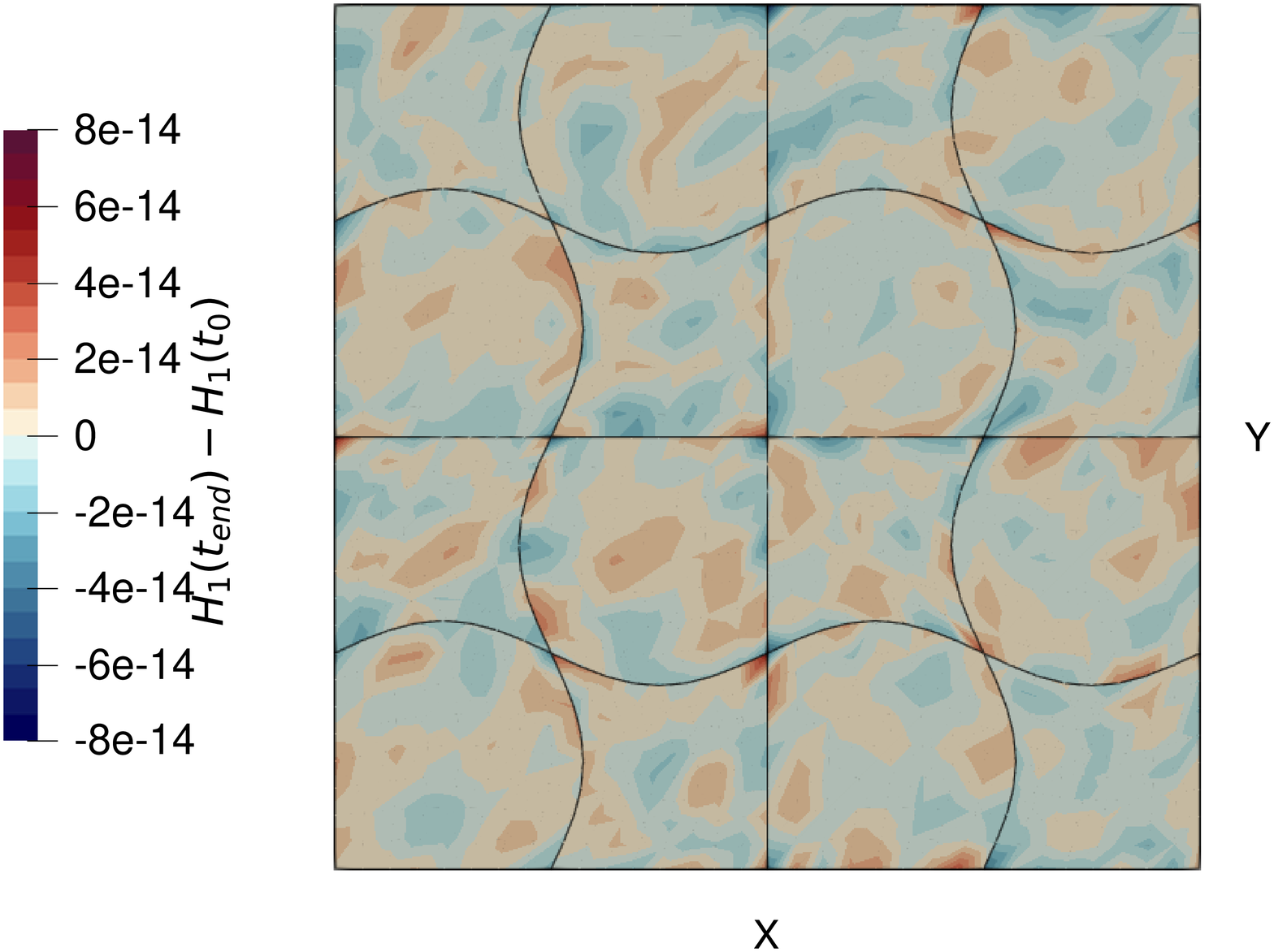}
	}
	\caption{(a) Discontinuous bottom topography in element $3\times2$ and (b) lake-at-rest error at time $t_{end}=100$ for the EC flux with $N=8$ and $\text{CFL}=0.7$}
	\label{fig:well_balanced}
\end{figure}

\subsection{Well-balanced Perturbation}
Next, we introduce a small discontinuous perturbation to the lake-at-rest configuration described in Section \ref{sec:results_well_balanced}, and examine the resulting change in entropy for both the EC and ES schemes. We adopt the previous setup, but modify the initial conditions \eqref{eq:well_balanced_IC} to include a discontinuous perturbation in $h_1$ within a single element
\begin{equation}
	H_1 = \begin{cases} 
		0.65 	&	\text{for element 2$\times$3} \\
		0.6		&	\text{else.}
	\end{cases}
	\label{eq:IC_Perturbation}
\end{equation}

To demonstrate entropy conservation for the approximation \eqref{eq:ec_discretization} and entropy stability for \eqref{eq:es_discretization}, we evaluate the total discrete entropy change within the domain $\Omega$ as
\begin{equation}
	\dot{S}_{\Omega} := \int\limits_{\Omega}\dot{S}\,dV 
	\approx \sum\limits_{i,j=0}^N J_{ij}\omega_{ij}\dot{S}_{ij}.
\end{equation}

Minimum, mean and maximum values of the entropy change for $\smash{\bigstatevec{F}^{EC}}$ and $\smash{\bigstatevec{F}^{ES}}$ within the time interval $T=[0,0.1]$ are presented in Table \ref{tab:entropy_conservation_test}.
For the EC flux we see that entropy is conserved discretely with a total discrete entropy change around machine precision. For the ES flux on the other hand, we observe a noticeable change as entropy is dissipated at discontinuities. The solutions shows a strict decrease in entropy throughout the time interval $T$ and therefore satisfies the entropy inequality \eqref{eq:entropy_inequality}. 
\begin{table}[htb]
	\centering
	\caption{Mean, minimum and maximum values of total discrete entropy change for the EC and ES fluxes in the domain $\Omega$ within $T=[0,1]$. Results obtained for $\text{CFL}=0.7$ and polynomial degree $N=8$.}
	\label{tab:entropy_conservation_test}
	\begin{tabular}{c|l l l}
		\toprule
		&	$\underset{t \in T}{\operatorname{min}} \dot{S}_{\Omega}$			& 	$\underset{t \in T}{\operatorname{mean}} \dot{S}_{\Omega}$			& $\underset{t \in T}{\operatorname{max}} \dot{S}_{\Omega}$ \\ 
		\midrule
		$\bigstatevec{F}^{EC}$	&	$-3.670\cdot10^{-16}$	&	$-1.194\cdot10^{-17}$	&	$\hspace{0.215cm}3.756\cdot10^{-16}$\\
		$\bigstatevec{F}^{ES}$ &	$-2.082\cdot10^{-2}$	&	$-1.273\cdot10^{-4}$	&	$-1.061\cdot10^{-5}$\\
		\bottomrule
	\end{tabular}
\end{table}

\subsection{Parabolic dam break}
As a final test case, we apply the EC and ES schemes to a more complex problem with non-periodic BCs and demonstrate the solution behavior. We consider a partial dam break problem with initial conditions similar to \cite{paz2011local,lee2011fast}, but with a modified geometry featuring a parabolic dam to create a curvilinear boundary. The solution domain is given by $\Omega=[0,10]^2$, and includes a parabolic dam described by the centerline $x=\frac{1}{25}y^2 - 0.4y + 6$, thickness $d=0.2$ and a gap size of $1$. We solve the problem on a curvilinear quadrilateral mesh containing 2239 elements and set the boundary conditions as slip-wall for the dam and Dirichlet for the outer boundaries. The curvilinear quadrilateral mesh was constructed using HOHQMesh.jl\footnote{https://github.com/trixi-framework/HOHQMesh.jl}. Initial conditions are chosen such that both layers are at rest, with a discontinuity across the dam.
\begin{equation}
	\begin{aligned}
			&h_1, h_2 = 
			\begin{cases}
				1.0  & \text{if} \quad  x \leq \frac{1}{25}y^2 - 0.4y + 6\\
				0.75 & \text{else}
			\end{cases}\\
			&\frac{\rho_1}{\rho_2} = 0.25, \quad g = 1, \quad u_1,u_2,v_1,v_2 = 0.
	\end{aligned}
\end{equation}

In Figure \ref{fig:dam_break_over_time} a sequence of computational results for the ES scheme obtained with polynomial degree $N=3$ is shown over time. After the initial dam break we observe a shock and rarefaction wave in both layers and the formation of vortices at the corners of the dam opening. Furthermore, in Figure \ref{fig:comparison_ec_es} we compare the result with the EC scheme without dissipation along the horizontal line $y=5$. We see that while both schemes do well in resolving the shock and rarefaction waves. The dissipation-free EC discretization shows significant spurious oscillations, due to missing entropy dissipation at shocks. The ES scheme on the other hand dissipates at shocks and eliminates notable oscillations. However, the additional dissipation is only designed to satisfy an entropy inequality and additional shock capturing is necessary to obtain a scheme that is oscillation free, while remaining ES, eg. \cite{hennemann2021provably,rueda2021entropy}\\
\begin{figure}[!htb]
	\subfigure[$T=0$]{
		\includegraphics[width=0.49\linewidth,trim={7cm 3cm 12cm 5cm},clip]{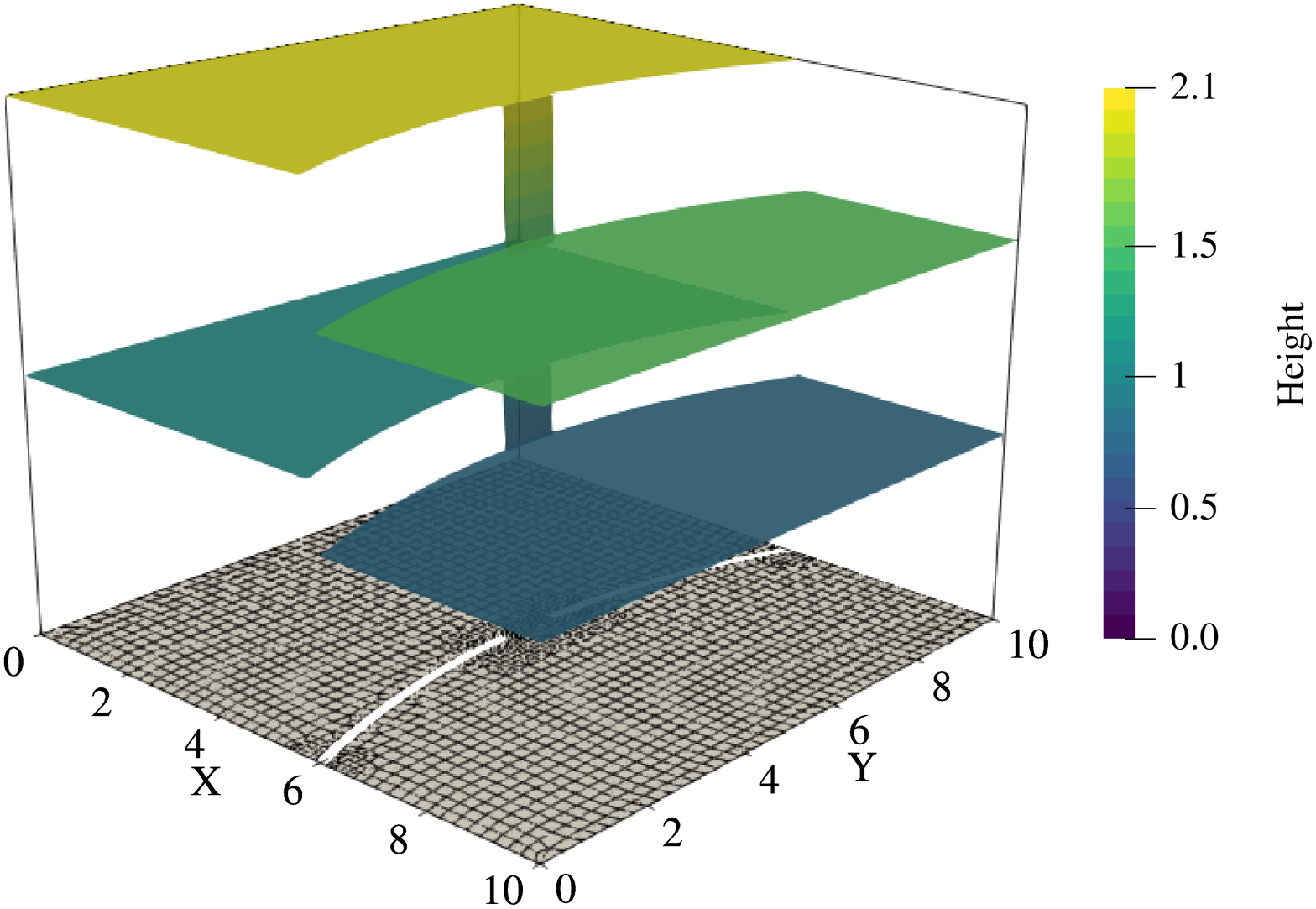}}
	\subfigure[$T=1$]{
		\includegraphics[width=0.49\linewidth,trim={7cm 3cm 12cm 5cm},clip]{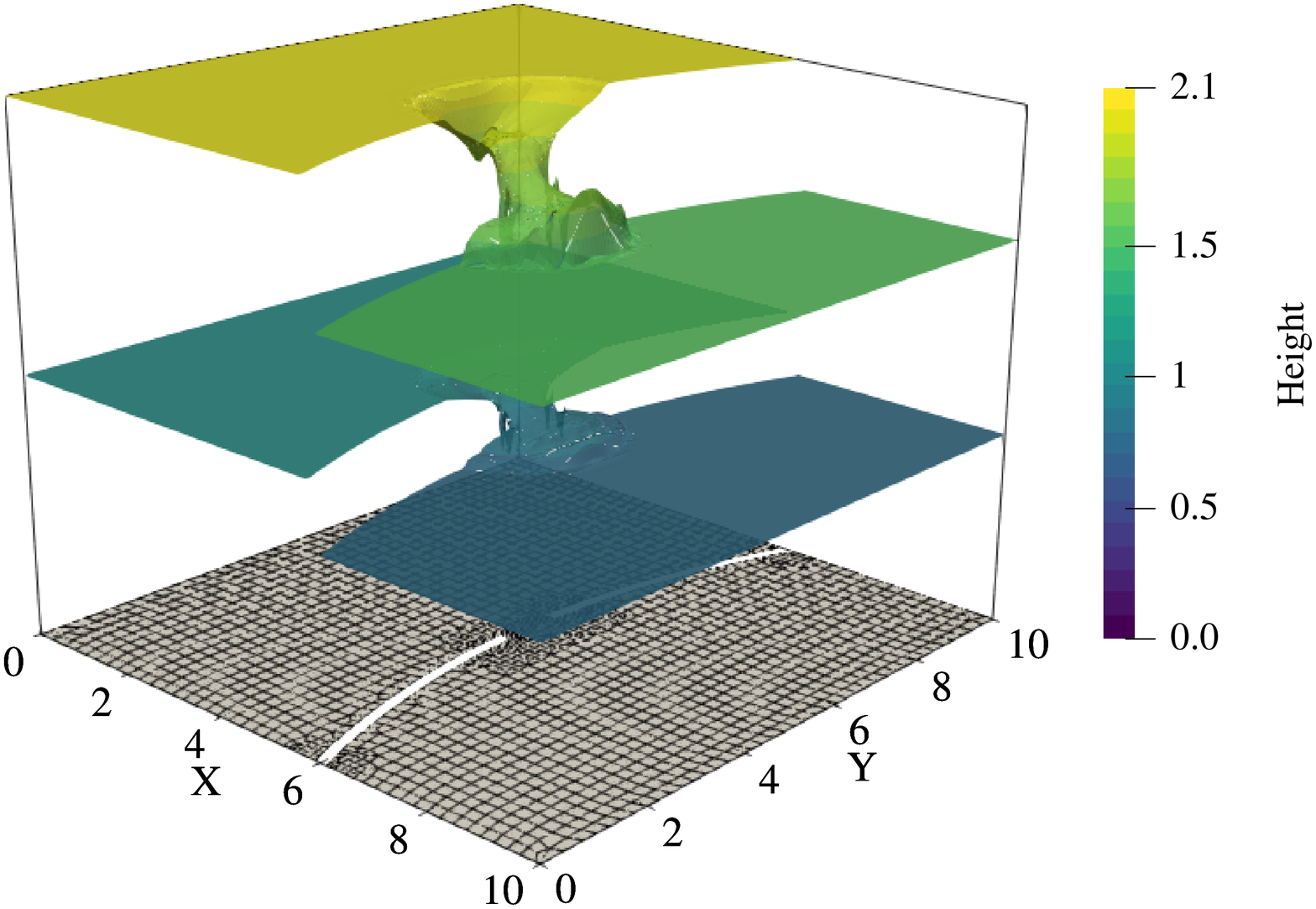}}
	\subfigure[$T=2$]{
		\includegraphics[width=0.49\linewidth,trim={7cm 3cm 12cm 5cm},clip]{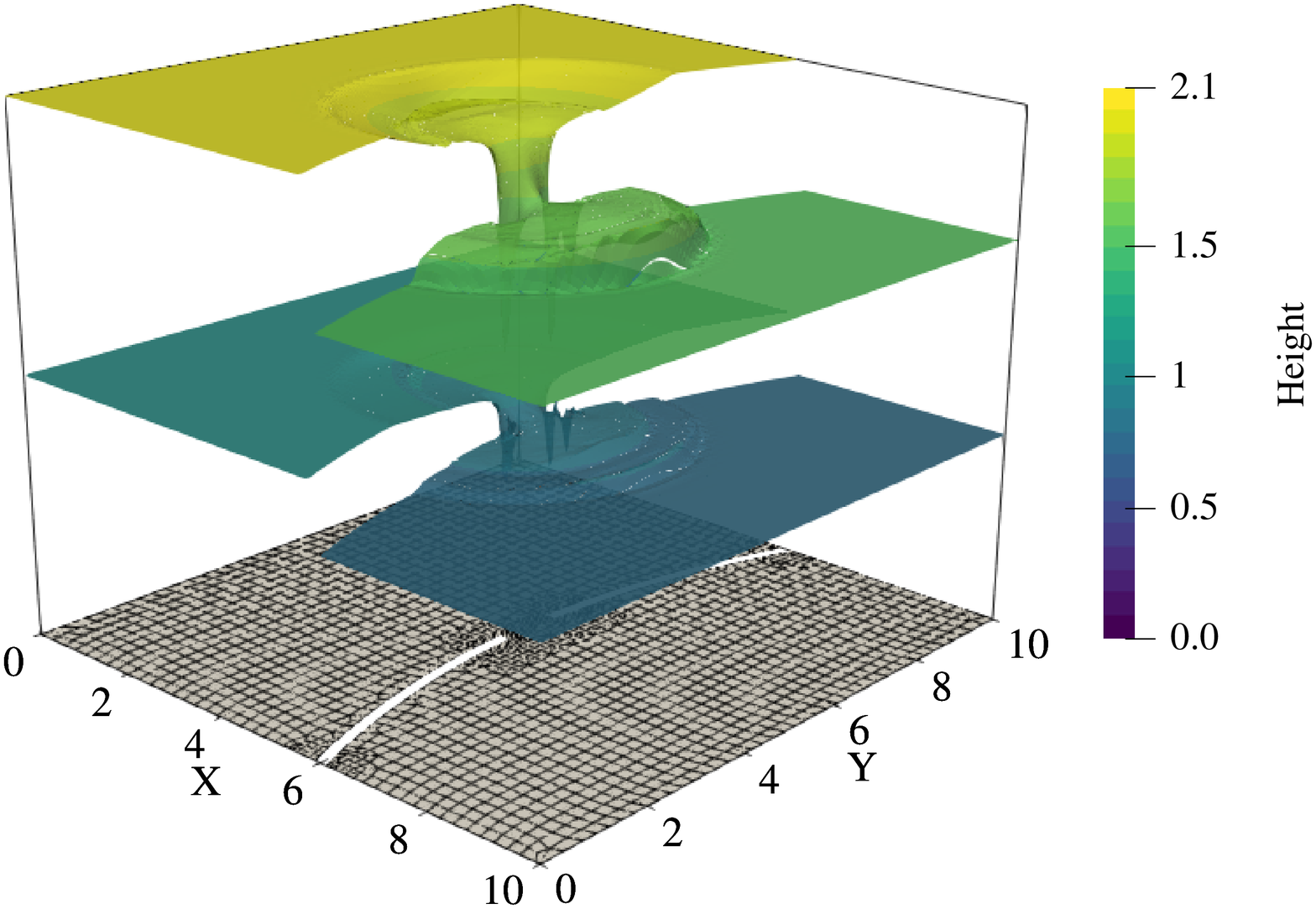}}
	\subfigure[$T=3$]{
		\includegraphics[width=0.49\linewidth,trim={7cm 3cm 12cm 5cm},clip]{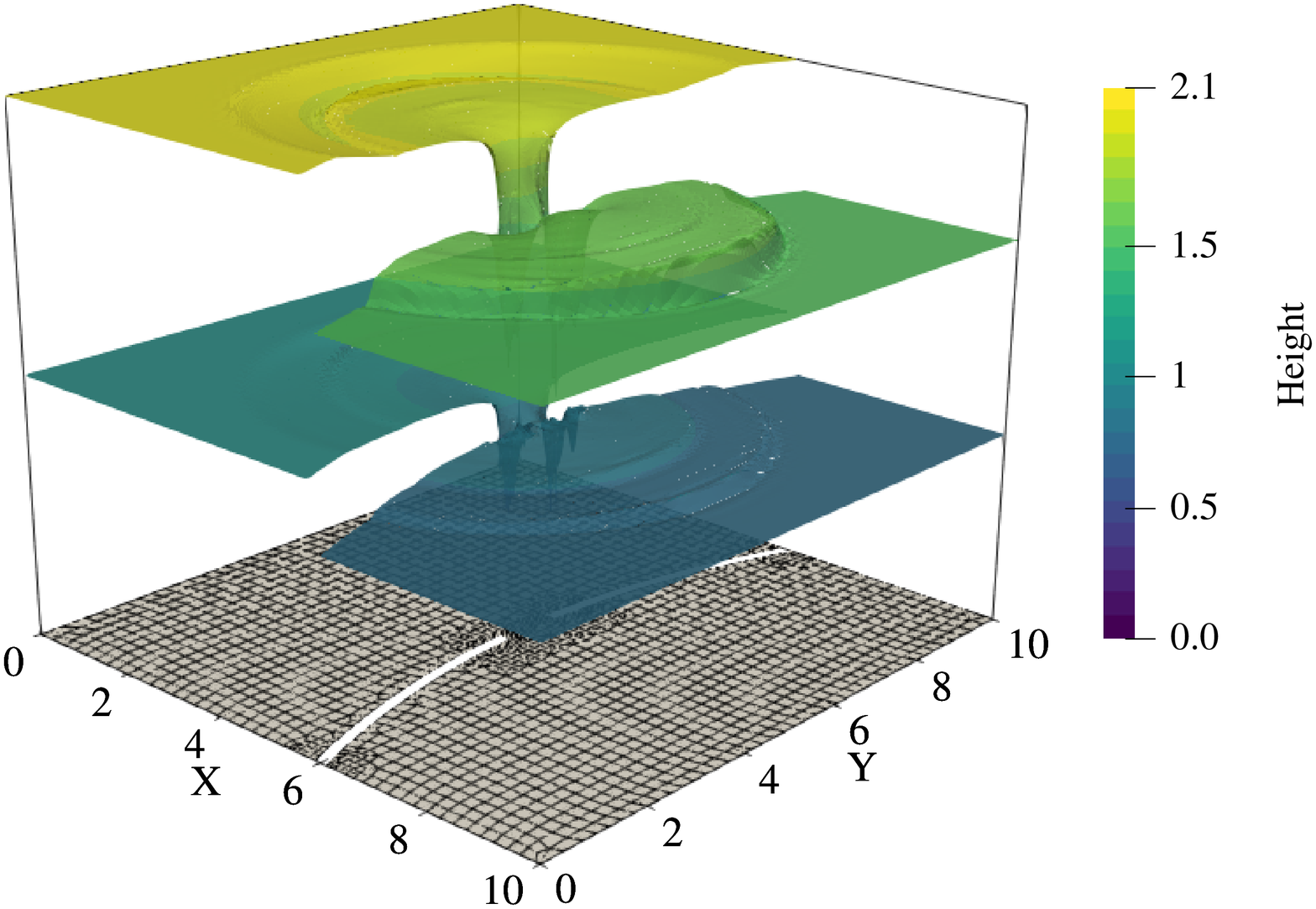}}
	\caption{Visualization of the dam break test case at different times for polynomial degree $N=3$ and $\Delta t=4\cdot10^{-4}$} 
	\label{fig:dam_break_over_time}
\end{figure}
\begin{figure}[!htb]
	\centering
	\includegraphics[width=0.7\linewidth]{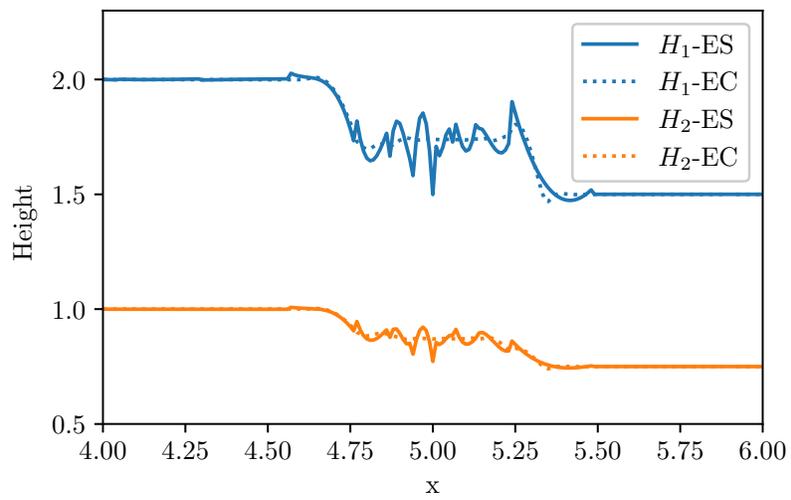}
	\caption{Layer heights along $y=5$ for the EC and ES flux at $t=0.25$ with polynomial degree $N=3$ and $\Delta t=10^{-4}$}
	\label{fig:comparison_ec_es}
\end{figure}

\FloatBarrier

\section{Conclusion}
% Basic DG approach
We presented a high-order collocated nodal discontinuous Galerkin spectral element method (DGSEM) for the two-layer shallow water equations on curvilinear quadrilateral meshes. We first constructed the DGSEM in a specific way such that it is endowed with the summation-by-parts (SBP) property and introduced a path-con\-ser\-va\-tive approximation of the nonconservative terms.
% Entropy Conservation
Using the SBP property we then replaced the volume contributions with a flux-differencing formulation and demonstrated that the resulting approximation is entropy conservative (EC) for a specific combination of the EC flux and numerical nonconservative terms.
% Well-Balancedness
Applying an equivalent discretization for the pressure and nonconservative terms, we further show that the method is well-balanced for discontinuous bottom topography.
% Entropy conservation
From the dissipation free EC formulation we then constructed an entropy stable scheme by adding additional numerical dissipation at interfaces using a local Lax-Friedrichs type dissipation as a closed form of the eigenvalues is not available.

% Results
Finally, we provide numerical results for a number of academic test cases to demonstrate convergence, well-bal\-an\-ced\-ness and entropy stability of the scheme. We then conclude with the application on a more complex parabolic dam break test and demonstrate the solution behavior. The numerical tests verify the analysis and show that the scheme is well-balanced and EC up to machine precision. 

% Outlook
In future work, we aim to complement the scheme with wetting and drying techniques and add shock capturing methods to create a scheme that is oscillation-free. Furthermore, we aim to extend the present formulation to obtain an entropy stable and well-balanced discretization of the Savage-Hutter model \cite{savage1989motion} for submarine avalanches. 

\section*{Acknowledgments}
	Patrick Ersing and Andrew Winters were funded through Vetenskapsrådet, Sweden grant agreement 2020-03642$\,\text{VR}$. Some computations were enabled by resources provided by the National Academic Infrastructure for Supercomputing in Sweden (NAISS) at Tetralith partially funded by the Swedish Research Council through grant agreement no. 2022-06725.

% References
\bibliographystyle{plain}      % mathematics and physical sciences
\bibliography{twolayer_bibliography}

% Appendix
\begin{appendices}

\section{Entropy Conservation Condition of the EC Flux}\label{sec:entropy_condition_volume}
To show that the EC flux \eqref{eq:volume_flux_ec} satisfies the entropy conservation condition \eqref{eq:entropy_conservation_condition}, we contract EC flux with the jump in entropy variables and restrict the derivation to the $\xi$-direction for clarity as the approach in $\eta$-direction is analogous. The contraction of the EC flux in the $\xi$-direction is given by
\begin{equation}
	\begin{aligned}
		\jump{\statevec{w}}^T \statevec{f_1}^{EC} = 
		&\jump{\rho_1\left(g\left(h_1 + h_2 + b\right) - \frac{u_1^2}{2}-\frac{v_1^2}{2}\right)}\avg{h_1u_1} 
		\\+&\jump{\rho_1v_1}\avg{h_1u_1}\avg{v_1}
		+
		\jump{\rho_1u_1}\left(\avg{h_1u_1}\avg{u_1} + g\avg{h_1}^2 - \frac{g}{2}\avg{h_1^2}\right)
		\\
		+&\jump{\rho_2\left(g\left(\frac{\rho_1}{\rho_2}h_1 + h_2 + b\right) - \frac{u_2^2}{2} - \frac{v_2^2}{2}\right)}\avg{h_2u_2} + \jump{\rho_2v_2}\avg{h_2u_2}\avg{v_2}\\ &\jump{\rho_2u_2}\left(\avg{h_2u_2}\avg{u_2} + g\avg{h_2}^2 - \frac{g}{2}\avg{h_2^2}\right).
	\end{aligned}
	\label{eq:entropy_condition_volume_step_1}
\end{equation}

We expand all the terms in \eqref{eq:entropy_condition_volume_step_1} and see that several terms cancel directly
\begin{equation}
	\begin{aligned}
		&\rho_1g\jump{h_1}\avg{h_1u_1} + \rho_1g\jump{h_2}\avg{h_1u_1} +
		\rho_1g\jump{b}\avg{h_1u_1}
		\\
		-&\frac{1}{2}\rho_1\jump{u_1^2+\cancel{v_1^2}}\avg{h_1u_1} 
		+\rho_1\jump{u_1}\avg{h_1u_1}\avg{u_1} + 
		\rho_1g\jump{u_1}\avg{h_1}\avg{h_1} 
		\\
		-&\rho_1\frac{g}{2}\jump{u_1}\avg{h_1^2}+
		\cancel{\rho_1\jump{v_1}\avg{h_1u_1}\avg{v_1}}+ 
		\rho_1g\jump{h_1}\avg{h_2u_2}
		\\
		+&\rho_2g\jump{h_2}\avg{h_2u_2} +
		\rho_2g\jump{b}\avg{h_2u_2} -
		\frac{1}{2}\rho_2\jump{u_2^2 + \cancel{v_2^2}}\avg{h_2u_2}
		\\
		+ &\rho_2\jump{u_2}\avg{h_2u_2}\avg{u_2} + 
		\rho_2g\jump{u_2}\avg{h_2}\avg{h_2} -
		\rho_2\frac{g}{2}\jump{u_2}\avg{h_2^2}
		\\
		+&\cancel{\rho_2\jump{v_2}\avg{h_2u_2}\avg{v_2}}.
	\end{aligned}
\end{equation}
Then we apply the jump rule $\jump{a^2} = 2\avg{a}\jump{a}$ and again cancel out the resulting equal terms
\begin{equation}
	\begin{aligned}
		&\rho_1g\jump{h_1}\avg{h_1u_1} + \rho_1g\jump{h_2}\avg{h_1u_1} +
		\rho_1g\jump{b}\avg{h_1u_1} 
		\\
		-&\cancel{\rho_1\jump{u_1}\avg{u_1}\avg{h_1u_1}}
		+ \cancel{\rho_1\jump{u_1}\avg{h_1u_1}\avg{u_1}} + \rho_1g\jump{u_1}\avg{h_1}\avg{h_1} 
		\\
		-&\rho_1\frac{g}{2}\jump{u_1}\avg{h_1^2}
		+ \rho_1g\jump{h_1}\avg{h_2u_2} + \rho_2g\jump{h_2}\avg{h_2u_2} 
		\\
		+&\rho_2g\jump{b}\avg{h_2u_2} -
		\cancel{\rho_2\jump{u_2}\avg{u_2}\avg{h_2u_2}}
		+ \cancel{\rho_2\jump{u_2}\avg{h_2u_2}\avg{u_2}}
		\\
		+&\rho_2g\jump{u_2}\avg{h_2}\avg{h_2} -
		\rho_2\frac{g}{2}\jump{u_2}\avg{h_2^2}.
	\end{aligned}
	\label{eq:proof_entropy_condition_eqn_1}
\end{equation}

We continue and group the expression for common variables 
\begin{equation}
	\begin{aligned}
		&\rho_1g\jump{h_1}\avg{h_1u_1} + \rho_1g\jump{u_1}\avg{h_1}\avg{h_1} -
		\rho_1\frac{g}{2}\jump{u_1}\avg{h_1^2}
		\\
		+&\rho_2g\jump{h_2}\avg{h_2u_2} + \rho_2g\jump{u_2}\avg{h_2}\avg{h_2} - \rho_2\frac{g}{2}\jump{u_2}\avg{h_2^2}
		\\
		+&\rho_1g\jump{h_2}\avg{h_1u_1} + \rho_1g\jump{h_1}\avg{h_2u_2}
		\\
		+&\rho_1g\jump{b}\avg{h_1u_1} + \rho_2g\jump{b}\avg{h_2u_2},
	\end{aligned}
\end{equation}
and then make use of an additional rule that holds for jumps and averages of two arbitrary quantities $a$ and $b$ given by
\begin{equation}
	\jump{a}\avg{ab} + \jump{b}\avg{a}^2 - \frac{1}{2}\jump{b}\avg{a^2} = \frac{1}{2}\jump{a^2b}.
	\label{eq:additional_rule_entropy_condition_volume}
\end{equation}

Applying this rule to \eqref{eq:proof_entropy_condition_eqn_1} now shows that the entropy conservation condition \eqref{eq:entropy_conservation_condition} holds
\begin{equation}
	\begin{aligned}
		&\frac{1}{2}\rho_1g\jump{h_1^2u_1} + \frac{1}{2}\rho_1g\jump{h_2^2u_2}
		\\&+\rho_1g\jump{h_2 + b}\avg{h_1u_1}
		+\rho_2g\jump{b + \tfrac{\rho_2}{\rho_1}h_1}\avg{h_2u_2}
		\\ =
		&\jump{\Psi_1} - \avg{\statevec{w}\circ\statevecGreek{\phi}}^T\jump{\statevec{r_1}}
	\end{aligned}
\end{equation}	

\section{Entropy Volume Contribution}\label{sec:proof_volume_contribution_entropy}
To show that the volume contributions generate entropy fluxes at the boundary, we will examine the volume contributions given by
\begin{equation}
	\begin{aligned}
 		\iprodN{\vec{\mathbb{D}}\cdot\bigcontravec{F}^{EC} ,\statevec{W}} + \iprodN{\spacevec{\mathbb{D}}\cdot(\statevecGreek{\Phi}\circ\bigcontravec{R})^{\diamond},\statevec{W}}.
	\end{aligned}
\end{equation}
First we expand the volume contributions of the fluxes
\begin{equation}
	\begin{aligned}
		\iprodN{\vec{\mathbb{D}}\cdot\bigcontravec{F}^{EC} ,\statevec{W}} = 
		\sum\limits_{i,j=0}^N \omega_{ij} \statevec{W}_{ij}^T
		&\left[2\sum_{m=0}^{N} \mathcal{D}_{im} \left(\bigstatevec{F}^{EC}(\statevec{U}_{ij}, \statevec{U}_{mj}) \cdot \avg{J\vec{a}^{\,1}}_{(i,m)j} \right)\right.
		\\&\hspace{-0.18cm}+
		\left.
		2\sum_{m=0}^{N} \mathcal{D}_{jm}   \left(\bigstatevec{F}^{EC}(\statevec{U}_{ij}, \statevec{U}_{im}) \cdot \avg{J\vec{a}^{\,2}}_{i(j,m)} \right)\right]
	\end{aligned}
	\label{eq:volume_flux_term_expanded}
\end{equation}
and the nonconservative terms
\begin{equation}
	\begin{aligned}
		\iprodN{\spacevec{\mathbb{D}}\cdot(\statevecGreek{\Phi}\circ\bigcontravec{R})^{\diamond},\statevec{W}}
		=
		\sum\limits_{i,j=0}^N \omega_{ij} \statevec{W}_{ij}^T
		&\left[2\sum_{m=0}^{N} \mathcal{D}_{im} \left(\left(\frac{1}{2}\statevecGreek{\Phi_{ij}}\circ\jump{\bigstatevec{R}}_{(i,m),j}\right) \cdot \avg{J\vec{a}^{\,1}}_{(i,m)j}\right) \right.
		\\ 
		&\hspace{-0.18cm}+\left.
		2\sum_{m=0}^{N} \mathcal{D}_{jm}   \left(\left(\frac{1}{2}\statevecGreek{\Phi_{ij}}\circ\jump{\bigstatevec{R}}_{i(j,m)}\right)\cdot \avg{J\vec{a}^{\,2}}_{i(j,m)} \right)\right].
	\end{aligned}
	\label{eq:volume_nonconservative_expanded}
\end{equation}
We will only present the proof for the $\xi$-direction as it simplifies the analysis and the $\eta$-direction is done analogously. Next, we rewrite \eqref{eq:volume_flux_term_expanded} in terms of the undivided differencing operator $\mathcal{Q}_{im} = \omega_{i}\mathcal{D}_{im}$
\begin{equation}
	\begin{aligned}
		\sum\limits_{i,j=0}^N \omega_{ij} \statevec{W}_{ij}^T 2&\sum_{m=0}^{N} \mathcal{D}_{im} \left(\bigstatevec{F}^{EC}(\statevec{U}_{ij}, \statevec{U}_{mj}) \cdot \avg{J\vec{a}^{\,1}}_{(i,m)j} \right)
		\\=
		&\sum\limits_{j=0}^N \omega_{j} \sum\limits_{i=0}^N\statevec{W}_{ij}^T \sum_{m=0}^{N} 2\omega_i\mathcal{D}_{im} \left(\bigstatevec{F}^{EC}(\statevec{U}_{ij}, \statevec{U}_{mj}) \cdot \avg{J\vec{a}^{\,1}}_{(i,m)j} \right)
		\\=
		&\sum\limits_{j=0}^N \omega_{j} \sum\limits_{i=0}^N\statevec{W}_{ij}^T \sum_{m=0}^{N} 2 \mathcal{Q}_{im} \left(\bigstatevec{F}^{EC}(\statevec{U}_{ij}, \statevec{U}_{mj}) \cdot \avg{J\vec{a}^{\,1}}_{(i,m)j} \right).
	\end{aligned}
\end{equation}
We make use of the SBP property $2\mathcal{Q}_{im} = \mathcal{Q}_{im} - \mathcal{Q}_{mi} + \mathcal{B}_{im}$ and subsequently exchange the indexing of $i$ and $m$ to rewrite $\mathcal{Q}_{mi}$ as $\mathcal{Q}_{im}$, using that $\bigstatevec{F}^{EC}(\statevec{U}_{ij}, \statevec{U}_{mj})$ and $\avg{J\vec{a}^{\,1}}_{(i,m)j}$ are symmetric regarding $i$ and $j$
\begin{equation}
	\begin{aligned}
		&\sum\limits_{i=0}^N\statevec{W}_{ij}^T \sum_{m=0}^{N} 2 \mathcal{Q}_{im} \left(\bigstatevec{F}^{EC}(\statevec{U}_{ij}, \statevec{U}_{mj}) \cdot \avg{J\vec{a}^{\,1}}_{(i,m)j} \right)
		\\=\hspace{0.14cm}
		&\hspace{-0.14cm}\sum\limits_{i,m=0}^{N} \statevec{W}_{ij}^T\left( \mathcal{Q}_{im} - \mathcal{Q}_{mi} + \mathcal{B}_{im} \right) \left(\bigstatevec{F}^{EC}(\statevec{U}_{ij}, \statevec{U}_{mj}) \cdot \avg{J\vec{a}^{\,1}}_{(i,m)j} \right)
		\\=\hspace{0.14cm}
		&\hspace{-0.14cm}\sum\limits_{i,m=0}^{N} \mathcal{Q}_{im}\left( \statevec{W}_{ij}  - \statevec{W}_{mj} \right)^T \left(\bigstatevec{F}^{EC}(\statevec{U}_{ij}, \statevec{U}_{mj}) \cdot \avg{J\vec{a}^{\,1}}_{(i,m)j} \right)
		\\ &\hspace{0.27cm}+ \mathcal{B}_{im}\statevec{W}_{ij}^T\left(\bigstatevec{F}^{EC}(\statevec{U}_{ij}, \statevec{U}_{mj}) \cdot \avg{J\vec{a}^{\,1}}_{(i,m)j} \right).
	\end{aligned}
	\label{eq:volume_proof_step_1}
\end{equation}
We then use consistency with the physical flux $\bigstatevec{F}^{EC} = \bigstatevec{F}$ at the boundary part, as the boundary matrix consists of zero entries besides $\mathcal{B}_{00}=-1$ and $\mathcal{B}_{NN}=1$ to substitute the definition of the entropy flux potential \eqref{eq:entropy_flux_potential}
\begin{equation}
	\mathcal{B}_{im}\statevec{W}_{ij}^T\bigstatevec{F}^{EC}(\statevec{U}_{ij}, \statevec{U}_{mj}) 
	= 
	\mathcal{B}_{im}\left(
	\spacevec{\Psi}_{ij} + \spacevec{F}^{\,S}_{ij}
	\right)
	\label{eq:entropy_condition_substitution_B}
\end{equation}
and then substitute this in \eqref{eq:volume_proof_step_1} to obtain
\begin{equation}
	\begin{aligned}
		&\sum\limits_{i=0}^N\statevec{W}_{ij}^T \sum_{m=0}^{N} 2 \mathcal{Q}_{im} \left(\bigstatevec{F}^{EC}(\statevec{U}_{ij}, \statevec{U}_{mj}) \cdot \avg{J\vec{a}^{\,1}}_{(i,m)j} \right)
		\\=\hspace{0.14cm}
		&\hspace{-0.14cm}\sum_{i,m=0}^{N} \mathcal{Q}_{im}\left( \statevec{W}_{ij}  - \statevec{W}_{mj} \right)^T \left(\bigstatevec{F}^{EC}(\statevec{U}_{ij}, \statevec{U}_{mj}) \cdot \avg{J\vec{a}^{\,1}}_{(i,m)j} \right)
		\\ &\hspace{0.27cm}+ 
		\mathcal{B}_{im}\left(
		\spacevec{\Psi}_{ij} + {\spacevec{F}}^{\,S}_{ij}
		\right) \cdot \avg{J\vec{a}^{\,1}}_{(i,m)j} 
	\end{aligned}.
	\label{eq:volume_proof_flux_xi_final}
\end{equation}

We then examine the contribution of the nonconservative term volume terms \eqref{eq:volume_nonconservative_expanded} in $\xi$-direction. Again, we use the undivided differencing operator and rewrite the numerical nonconservative term
\begin{equation}
	\begin{aligned}
		&\sum\limits_{i,j=0}^N \omega_{ij} \statevec{W}_{ij}^T
		2\sum_{m=0}^{N} \mathcal{D}_{im} \left(\left(\frac{1}{2}\statevecGreek{\Phi_{ij}}\circ\jump{\bigstatevec{R}}_{(i,m),j}\right) \cdot \avg{J\vec{a}^{\,1}}_{(i,m)j}\right)
		\\=
		&\sum\limits_{j=0}^N \omega_{j} \sum\limits_{i=0}^{N}\statevec{W}_{ij}^T
		\sum_{m=0}^{N} 2\mathcal{Q}_{im} \left(\left(\frac{1}{2}\statevecGreek{\Phi_{ij}}\circ\left(\bigstatevec{R}_{mj} - \bigstatevec{R}_{ij}\right)\right) \cdot \avg{J\vec{a}^{\,1}}_{(i,m)j}\right)
	\end{aligned}
\end{equation}
We apply the SBP property $2\mathcal{Q}_{im} = \mathcal{Q}_{im} - \mathcal{Q}_{mi} + \mathcal{B}_{im}$, exchange indices $i$ and $m$ to rewrite the second operator and regroup the terms
\begin{equation}
	\begin{aligned}
		\sum\limits_{i=0}^{N}\statevec{W}_{ij}^T
		&\sum_{m=0}^{N} 2\mathcal{Q}_{im} \left(\left(\frac{1}{2}\statevecGreek{\Phi_{ij}}\circ\left(\bigstatevec{R}_{mj} - \bigstatevec{R}_{ij}\right) \right) \cdot \avg{J\vec{a}^{\,1}}_{(i,m)j}\right)
		\\=
		\sum\limits_{i=0}^{N}\statevec{W}_{ij}^T
		&\sum_{m=0}^{N} \left(\mathcal{Q}_{im} - \mathcal{Q}_{mi} + \mathcal{B}_{im}\right)
		\left(\left(\frac{1}{2}\statevecGreek{\Phi_{ij}}\circ\left(\bigstatevec{R}_{mj} - \bigstatevec{R}_{ij}\right) \right) \cdot \avg{J\vec{a}^{\,1}}_{(i,m)j}\right)
		\\=
		&\sum_{i,m=0}^{N} \mathcal{Q}_{im} \left( \frac{1}{2}\left(\statevec{W}_{ij}\circ\statevecGreek{\Phi}_{ij} + \statevec{W}_{mj}\circ\statevecGreek{\Phi}_{mj}\right)^T\left(\bigstatevec{R}_{mj} - \bigstatevec{R}_{ij}\right) \cdot\avg{J\vec{a}^{\,1}}_{(i,m)j} \right)
		\\+&\sum_{i,m=0}^{N} \mathcal{B}_{im} \left(\statevec{W}_{ij}^T\left(\frac{1}{2}\statevecGreek{\Phi_{ij}}\circ\left(\bigstatevec{R}_{mj} - \bigstatevec{R}_{ij}\right)\right) \cdot \avg{J\vec{a}^{\,1}}_{(i,m)j}\right).
	\end{aligned}
\end{equation}
We then use the structure of the boundary matrix, which has only zero entries apart from $\mathcal{B}_{00}=-1$ and $\mathcal{B}_{NN}=1$ such that the boundary terms cancel due to the jumps and the nonconservative volume contribution simplifies to 
\begin{equation}
	\begin{aligned}
		\sum\limits_{i=0}^{N}\statevec{W}_{ij}^T
		&\sum_{m=0}^{N} 2\mathcal{Q}_{im} \left(\left(\statevecGreek{\Phi_{ij}}\circ\jump{\bigstatevec{R}}\right) \cdot \avg{J\vec{a}^{\,1}}_{(i,m)j}\right)
		\\=	
		&\sum_{i,m=0}^{N} \mathcal{Q}_{im} \left( \frac{1}{2}\left(\statevec{W}_{ij}\circ\statevecGreek{\Phi}_{ij} + \statevec{W}_{mj}\circ\statevecGreek{\Phi}_{mj}\right)^T\left(\bigstatevec{R}_{mj} - \bigstatevec{R}_{ij}\right) \cdot\avg{J\vec{a}^{\,1}}_{(i,m)j} \right)
	\end{aligned}
\label{eq:volume_proof_noncons_xi_final}
\end{equation}

With the result \eqref{eq:volume_proof_flux_xi_final} we now gather the total volume contribution in $\xi$-direction. From \eqref{eq:volume_proof_flux_xi_final} and \eqref{eq:volume_proof_noncons_xi_final} we substitute the entropy conservation condition \eqref{eq:entropy_conservation_condition} to obtain
\begin{equation}
	\begin{aligned}
		&\sum_{i,m=0}^{N} \mathcal{Q}_{im}\left(\left( \statevec{W}_{ij}  - \statevec{W}_{mj} \right)^T \bigstatevec{F}^{EC}(\statevec{U}_{ij}, \statevec{U}_{mj}) \cdot \avg{J\vec{a}^{\,1}}_{(i,m)j} \right)
		\\
		&\quad+\mathcal{Q}_{im} \left( \frac{1}{2}\left(\statevec{W}_{ij}\circ\statevecGreek{\Phi}_{ij} + \statevec{W}_{mj}\circ\statevecGreek{\Phi}_{mj}\right)^T\left(\bigstatevec{R}_{mj} - \bigstatevec{R}_{ij}\right) \cdot\avg{J\vec{a}^{\,1}}_{(i,m)j} \right)
		\\ &\quad+ 
		\mathcal{B}_{im}\left(
		\spacevec{\Psi}_{ij} + {\spacevec{F}}^{\,S}_{ij}
		\right) \cdot \avg{J\vec{a}^{\,1}}_{(i,m)j}.
		\\=
		&\sum_{i,m=0}^{N} \mathcal{Q}_{im}\left(\left(\spacevec{\Psi}_{ij} - \spacevec{\Psi}_{mj}\right) \cdot \avg{J\vec{a}^{\,1}}_{(i,m)j} \right)
		+ 
		\mathcal{B}_{im}\left(
		\spacevec{\Psi}_{ij} + {\spacevec{F}}^{\,S}_{ij}
		\right) \cdot \avg{J\vec{a}^{\,1}}_{(i,m)j}.
	\end{aligned}
\end{equation}

We then reindex the second term $\spacevec{\Psi}_{mj}$ and apply SBP again to find that the entropy potential vanishes on the boundary part and use that the rows of $\mathcal{Q}$ sum to zero in the last step to obtain
\begin{equation}
	\begin{aligned}
		&\sum_{i,m=0}^{N} \mathcal{Q}_{im}\left(\left(\spacevec{\Psi}_{ij} - \spacevec{\Psi}_{mj}\right) \cdot \avg{J\vec{a}^{\,1}}_{(i,m)j} \right)
		+ 
		\mathcal{B}_{im}\left(
		\spacevec{\Psi}_{ij} + {\spacevec{F}}^{\,S}_{ij}
		\right) \cdot \avg{J\vec{a}^{\,1}}_{(i,m)j}.
		\\=
		&\sum_{i,m=0}^{N} \mathcal{Q}_{im}\left(\left(\spacevec{\Psi}_{ij} + \spacevec{\Psi}_{ij}\right) \cdot \avg{J\vec{a}^{\,1}}_{(i,m)j} \right)
		+ 
		\mathcal{B}_{im}\left(
		\spacevec{\Psi}_{ij} - \spacevec{\Psi}_{ij} + {\spacevec{F}}^{\,S}_{ij}
		\right) \cdot \avg{J\vec{a}^{\,1}}_{(i,m)j}.
		\\=
		&\sum_{i,m=0}^{N} 2\mathcal{Q}_{im}\left(\spacevec{\Psi}_{ij} \cdot \avg{J\vec{a}^{\,1}}_{(i,m)j} \right)
		+ 
		\mathcal{B}_{im}\left( {\spacevec{F}}^{\,S}_{ij} \cdot \avg{J\vec{a}^{\,1}}_{(i,m)j}\right)
		\\=
		&\sum_{i,m=0}^{N} \mathcal{Q}_{im}\left(\spacevec{\Psi}_{ij} \cdot \left(J\vec{a}^{\,1}\right)_{mj} \right)
		+ 
		\mathcal{B}_{im}\left( {\spacevec{F}}^{\,S}_{ij} \cdot \avg{J\vec{a}^{\,1}}_{(i,m)j}\right).
	\end{aligned}
\end{equation}

We use the same approach for the $\eta$-direction and gather the total volume contributions in both directions to have
\begin{equation}
	\begin{aligned}
		&\iprodN{\vec{\mathbb{D}}\cdot\bigcontravec{F}^{EC} ,\statevec{W}} + \iprodN{\spacevec{\mathbb{D}}\cdot(\statevecGreek{\Phi}\circ\bigcontravec{R})^{\diamond},\statevec{W}}
		\\=
		&\sum\limits_{i,j=0}^N \omega_{ij} \sum\limits_{m=0}^{N} \mathcal{D}_{im} \left( \spacevec{\Psi}_{ij} \cdot \left(J\spacevec{a}^1\right)_{mj} \right)
		+\sum\limits_{i,j=0}^N \omega_{ij} \sum\limits_{m=0}^{N} \mathcal{D}_{jm} \left( \spacevec{\Psi}_{ij}\cdot \left(J\spacevec{a}^2\right)_{im} \right)
		\\
		+&\sum\limits_{j=0}^N \omega_{j} \sum\limits_{i,m=0}^{N} \mathcal{B}_{im}\left(
		\spacevec{F}^{\,S}_{ij} \cdot \avg{J\vec{a}^{\,1}}_{(i,m)j}\right)
		+\sum\limits_{j=0}^N \omega_{j} \sum\limits_{i,m=0}^{N} \mathcal{B}_{jm}\left(
		\spacevec{F}^{\,S}_{ij} \cdot \avg{J\vec{a}^{\,1}}_{i(j,m)}\right).
	\end{aligned}
\end{equation}

These are regrouped to separate the contributions from the metric terms
\begin{equation}
	\begin{aligned}
		&\iprodN{\vec{\mathbb{D}}\cdot\bigcontravec{F}^{EC} ,\statevec{W}} + \iprodN{\spacevec{\mathbb{D}}\cdot(\statevecGreek{\Phi}\circ\bigcontravec{R})^{\diamond},\statevec{W}}
		\\=
		&\sum\limits_{i,j=0}^N \omega_{ij} \left(\Psi_1\right)_{ij}
		\left(\sum\limits_{m=0}^{N} \mathcal{D}_{im} \left(Ja_1^1\right)_{mj} + 
		\sum\limits_{m=0}^{N} \mathcal{D}_{jm} \left(Ja_1^2\right)_{im}\right)
		\\
		+&\sum\limits_{i,j=0}^N \omega_{ij} \left(\Psi_2\right)_{ij}
		\left(\sum\limits_{m=0}^{N} \mathcal{D}_{im} \left(Ja_2^1\right)_{mj} + 
		\sum\limits_{m=0}^{N} \mathcal{D}_{jm} \left(Ja_2^2\right)_{im}\right)
		\\
		+&\sum\limits_{j=0}^N \omega_{j} \sum\limits_{i,m=0}^{N} \mathcal{B}_{im}\left(
		\spacevec{F}^{\,S}_{ij} \cdot \avg{J\vec{a}^{\,1}}_{(i,m)j}\right)
		+\sum\limits_{j=0}^N \omega_{j} \sum\limits_{i,m=0}^{N} \mathcal{B}_{jm}\left(
		\spacevec{F}^{\,S}_{ij} \cdot \avg{J\vec{a}^{\,1}}_{i(j,m)}\right).
	\end{aligned}
\end{equation}

This can now be used to show that the volume contributions cancel in entropy space, if the metric identities are satisfied discretely so that

\begin{equation}
	\sum\limits_{l=1}^{2} \frac{\partial}{\partial \xi^l} \mathbb{I}^N \left(Ja_n^l\right)
	=
	\sum\limits_{m=0}^N \mathcal{D}_{im}\left(Ja_n^1\right)_{mj} + \mathcal{D}_{jm}\left(Ja_n^2\right)_{im} = 0.
\end{equation}

From that we see that the volume terms vanish in entropy space and generate the entropy flux at the boundary	
\begin{equation}
	\iprodN{\vec{\mathbb{D}}\cdot\bigcontravec{F}^{EC} ,\statevec{W}} + \iprodN{\spacevec{\mathbb{D}}\cdot(\statevecGreek{\Phi}\circ\bigcontravec{R})^{\diamond},\statevec{W}}
	=
	\int\limits_{\partial E,N} \left(\spacevec{F}^{\,S}\cdot\spacevec{n}\right) \hat{s} \dS.
\end{equation}

\end{appendices}

\end{document}